\numberwithin{equation}{section}
\newtheorem{thm}{Theorem}[section]
\newtheorem{lem}{Lemma}[section]
\newtheorem{prop}{Proposition}[section]
\theoremstyle{definition}
\newtheorem{defn}{Definition}[section]
\theoremstyle{remark}
\newtheorem{rem}{Remark}[section]
\theoremstyle{claim}
\begin{document}
\thispagestyle{empty}
\setcounter{page}{1}

\noindent


\begin{center}
{\large\bf Well-posedness in Critical Spaces for the
Density-dependent\\ Incompressible Viscoelastic Fluid System}

\vskip.20in

Huazhao Xie and  Yunxia Fu

\end{center}

{\footnotesize
\noindent
{\bf Abstract.}
 We are concerned with  the well-posedness  of
 the density-dependent incompressible viscoelastic  fluid system. By Schauder-Tychonoff fixed point argument,
when $\|{1}/{\rho_0}-1\|_{\dot{B}_{p,1}^{{N}/{p}}}$ is small, local
well-posedness is showed to hold in Besov space. Furthermore,
provided the initial data $(1/\rho_0 -1,\upsilon_0,U_0-I)$ is small
under certain
 norm, we also get the  existence of the global solution.\\[3pt]
{\bf Keywords.} Incompressible viscoelastic fluid; Well-posedness; Besov space; Oldroyd model; Fixed point theorem.\\[3pt]
{\small\bf AMS  subject classification:} 35A05, 76A10, 76D03. }
\vskip.2in

\section{Introduction}

\noindent
Viscoelastic fluids have a wide range of applications and hence have
received a great deal of interest. Examples and applications of
viscoelastic fluids include  oil, liquid polymers, mucus, liquid
soap, toothpaste, clay, ceramics, coatings, drug delivery systems
for controlled drug release,  viscoelastic blood flow  past valves
and so on, see [7] for more applications. The motion of a
density-dependent incompressible viscoelastic fluid is described by
the following inhomogeneous Oldroyd system:
 \begin{equation}
    \left\{
     \renewcommand{\arraystretch}{1.25}
    \begin{array}{l}
        \rho_t +  $div$( \rho\upsilon)=0, \quad (t,x)\in (0,+\infty)\times {\mathbb{R}}^N,\\
 (\upsilon\upsilon)_t + $div$\ (\rho\upsilon \otimes\upsilon )+ \nabla
P=\mu \Delta \upsilon +$div$\ (\rho UU^T) ,\\
 U_t +\upsilon\cdot \nabla U=\nabla \upsilon U,\\
 $div$\ \upsilon=0
    \end{array}
    \right.\label{bl-E1.1}
    \end{equation}
supplemented  with the initial data
\begin{equation}\label{intial}
(\rho,\upsilon, U)|_{t=0}=(\rho_0,\upsilon_0,U_0),
\end{equation}
where $N\geq 2, \ \rho(t,x), \upsilon(t,x), P(t,x)$ and
$U(t,x)=(U^{ij}(t,x))_{N\times N}$ denote the density, velocity,
hydrodynamic pressure and the deformation tensor respectively, the
  viscosity coefficient $\mu>0$ is a constant.

  In the context of hydrodynamics, the
motion of the fluid flow is descried by the particle trajectory
$x(t,X)$, where material points $X$ are deformed to the spatial
position $x(t,X)$ at the time $t$. The deformation tensor
$\widetilde{U}(t,x)=\frac{\partial x}{\partial X}(t,X)$, when we
work in Eulerian coordinate, we denote it by
$U(t,x)=\widetilde{U}(t,X^{-1}(t,x))$. Applying the chain rule, we
see that $U(t,x)$ satisfies the transport equation
(\ref{bl-E1.1})$_3$, which stands for
$$
U_t^{ij}+\upsilon^k \nabla_k U^{ij}=\nabla_k \upsilon^i U^{kj},\quad
\text{for}\quad i,j=1,\cdots,N,
$$
where $ \nabla_i =\frac{\partial}{\partial x_i},\
U^{ij}=\frac{\partial x^i}{
\partial X^j}, \ \nabla_j \upsilon^i=(\nabla \upsilon)_{ij}. $

We assume that the initial data  satisfy the constrains
\begin{equation}
    \left\{
     \renewcommand{\arraystretch}{1.25}
    \begin{array}{l}
 $div$\ \upsilon_0 =0,\\
 $div$\ (\rho_0 U_0^T) =0,\\
 U_0^{lk}\nabla_l U_0^{ij}- U_0^{ij}\nabla_l U_0^{ik}=0.
    \end{array}
    \right.\label{bl-E1.2}
    \end{equation}
Using (\ref{bl-E1.2}), it is easy to obtain that   div $(\rho
U^T)=0$ and
\begin{equation}\label{bl-E1.3}
U^{lk}\nabla_l U^{ij}-U^{lj}\nabla_l U^{ik}=0,
\end{equation}
hold for all time, see [8,12].

System (\ref{bl-E1.1}) has been studied extensively. When the
density is a constant, system (\ref{bl-E1.1}) governs the
homogeneous incompressible viscoelastic fluids, and there exist rich
results in the literature for the global existence of classical
solutions, see [2,9,10,11] and the references therein.
Let $H=U-I$ be the perturbation of deformation tensor $U$, Lei et
al. [9] find that
\begin{equation}\label{bl-E1.4}
\nabla_k H^{ij}-\nabla_j H^{ik}= H^{lj}\nabla_l H^{ik} -H^{lk}
\nabla_l H^{ij},
\end{equation}
which is useful to prove the global existence. When density
$\rho(t,x)$ is not a constant, the problem related to existence
becomes more complicated and not much has been done.  Qian and Zhang
[12]  got the well-posedness in critical spaces for the
compressible viscoelastic fluids. One of the difficulties in proving
the global existence is the lacking of dissipative estimate for the
deformation gradient, to overcome this difficulty, Lei et al. in
[9] introduced an auxiliary function, the authors in
[12] explore the smoothing effect and damping effect of the
system by viewing it a hyperbolic-parabolic coupled system. However,
This paper is devoted to the density-dependent incompressible
viscoelastic fluids, those methods do not apply directly. We need to
estimate  the pressure term. By using standard energy estimates for
dyadic blocks, we can get the estimate of $\nabla P$. Hu and Wang in [13]  considered the
the three-dimensional density-dependent incompressible viscoelastic fluids, they got the
existence and uniqueness of the global strong solution with small initial data. Our assumptions and methods are different, we consider
the problem in critical spaces.

We shall use scaling considerations to find which spaces are
critical for (\ref{bl-E1.1}).  System (\ref{bl-E1.1}) is invariant
by the rescaling $(\rho,\upsilon,P,U)\mapsto
(\rho_l,\upsilon_l,P_l,U_l)$ with
\begin{eqnarray*}
&& \rho_l (t,x)=\rho(l^2 t,lx),\quad \upsilon_l (t,x)= l \upsilon(l^2
t,lx),\\
&& P_l (t,x)=l^2 P(l^2 t,lx),\quad U_l (t,x)=l^2 U(l^2
t,lx).
\end{eqnarray*}
This motivates the following definition:

\begin{defn}
We say that a functional space is critical with respect to the
scaling of the equations if the associated norm is invariant under
the transformation: $(\rho,\upsilon,P,U)\mapsto
(\rho_l,\upsilon_l,P_l,U_l)$ (up to a constant independent of $l$).
\end{defn}

In Sobolev spaces setting, the above definition would lead us to
consider initial data in $\dot{H}^{N/p}\times
(\dot{H}^{N/p-1})^N\times (\dot{H}^{N/p})^{N^2}$. If $\rho$ vanishes
or becomes unbounded, system (\ref{bl-E1.1}) degenerates, it is
reasonable to  assume that $\rho_0 \in L^{\infty}$. For technical
reasons, we suppose that the initial data belong to a somewhat
smaller homogeneous Besov space $ \dot{B}_{p,1}^{{N}/{p}} \times (
\dot{B}_{p,1}^{{N}/{p}-1} )^{N} \times ( \dot{B}_{p,1}^{{N}/{p}}
)^{N^2}. $ Set
$$
\sigma=\frac{1}{\rho}-1,\quad H=U-I,
$$
the system (\ref{bl-E1.1})-(\ref{intial}) can be reformulated as
follows
\begin{equation}
    \left\{
     \renewcommand{\arraystretch}{1.25}
    \begin{array}{l}
        \sigma_t +  \upsilon\cdot \nabla \sigma=0, \\
 \upsilon^i_t + \upsilon \cdot\nabla \upsilon^i+ (\sigma+1)\nabla_i
P=\mu (\sigma +1)\Delta \upsilon^i +\partial_k H^{ik} +H^{jk}\partial_j H^{ik},\\
 H_t +\upsilon\cdot \nabla H=\nabla \upsilon (H+I),\\
 $div$\ \upsilon=0, \\
 (\sigma,\upsilon, H)|_{t=0}=(\sigma_0,\upsilon_0,H_0).
    \end{array}
    \right.\label{bl-E3.1}
    \end{equation}
    The well-posedness of the system (\ref{bl-E1.1})-(\ref{intial})
    is equivalent to the system (\ref{bl-E3.1}).

 The main results of this paper are as follows.
\begin{thm}\label{th1}  For the system
(\ref{bl-E3.1}), let $p\in[1,N]$, there exists a constant $c=c(N)$,
such that for any   $\upsilon_0 \in (\dot{B}_{p,1}^{{N}/{p}-1})^N$
with div $\upsilon_0 =0$, $H_0 \in (\dot{B}_{p,1}^{N/p})^{N^2}$ and
$\sigma_0 \in \dot{B}_{p,1}^{{N}/{p}}$ with
$\|\sigma_0\|_{\dot{B}_{p,1}^{N/p}}\leq c$, then there exists a
positive time $T\in (0,+\infty)$ such that system (\ref{bl-E3.1})
has a unique solution $(\sigma,\upsilon, H,\nabla P)$ with
\begin{eqnarray*}
\sigma\in{\mathcal{C}}([0,T];\dot{B}_{p,1}^{s}),&& \upsilon\in
\Big({\mathcal{C}}([0,T];\dot{B}_{p,1}^{s-1})\cap L^1
([0,T];\dot{B}_{p,1}^{s+1})\Big)^{N},\\
 H \in {\mathcal{C}}([0,T];\dot{B}_{p,1}^{s})^{N^2},
&\text{and}& \nabla P \in (L^1 ([0,T];\dot{B}_{p,1}^{s-1})^N.
\end{eqnarray*}
In addition, for $p=2$, we denote $B^s:=B^s_{2,1}$,  there exists a
constant $\eta$ such that, if
$$
\|\sigma_0\|_{B^{\frac{N}{2}}\cap B^{\frac{N}{2}-1}}+ \|H_0
\|_{B^{\frac{N}{2}}\cap B^{\frac{N}{2}-1}} +\|\upsilon_0 \|_{
B^{\frac{N}{2}-1}}\leq \eta,
$$
then the system (\ref{bl-E3.1})  has a unique global solution with
$(\sigma,\upsilon, H)\in {\mathcal{H}}^{N/2}$, where
\begin{eqnarray*}
{\mathcal{H}}^s &:=&\big( L^2(R^+; B^s)\cap {\mathcal {C}}(R^+; B^s
\cap B^{s-1}) \big)\\
&& \quad \times \big( L^1(R^+; B^{s+1})\cap {\mathcal
{C}}(R^+; B^s \cap B^{s-1}) \big)^{N}\\
&& \quad \times \big( L^2(R^+; B^s)\cap {\mathcal {C}}(R^+; B^s \cap
B^{s-1}) \big)^{N^2},
\end{eqnarray*}
and $\nabla P \in  L^1(R^+; B^{s-1})^{N}$.
\end{thm}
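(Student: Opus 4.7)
The plan is to treat the two assertions separately: local existence by a Schauder--Tychonoff fixed point (as announced in the abstract), and global existence for $p=2$ by propagating a smallness norm along the local solution.

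\emph{Step 1 (Fixed-point map and pressure).} On a ball of the resolution space
$$E_T:=\mathcal{C}([0,T];\dot{B}_{p,1}^{N/p})\times\Bigl(\mathcal{C}([0,T];\dot{B}_{p,1}^{N/p-1})\cap L^1([0,T];\dot{B}_{p,1}^{N/p+1})\Bigr)^N\times\mathcal{C}([0,T];\dot{B}_{p,1}^{N/p})^{N^2},$$
I would define $\Phi(\tilde\sigma,\tilde\upsilon,\tilde H)=(\sigma,\upsilon,H)$ by freezing coefficients: $\sigma$ solves $\sigma_t+\tilde\upsilon\cdot\nabla\sigma=0$, $H$ solves the transport equation with source $\nabla\tilde\upsilon(\tilde H+I)$, and $\upsilon$ solves the linear heat problem obtained from the momentum equation in (\ref{bl-E3.1}) by moving all lower-order terms to the right-hand side. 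The pressure is recovered from $\operatorname{div}\upsilon=0$ by solving the elliptic problem
$$\operatorname{div}\bigl((\tilde\sigma+1)\nabla P\bigr)=\operatorname{div}\bigl(-\tilde\upsilon\cdot\nabla\tilde\upsilon+\mu\tilde\sigma\Delta\tilde\upsilon+\partial_k\tilde H^{ik}+\tilde H^{jk}\partial_j\tilde H^{ik}\bigr)_i.$$
Since $\|\tilde\sigma\|_{\dot{B}_{p,1}^{N/p}}\le c$, the left-hand operator is a small perturbation of $\Delta$ in the Besov scale, so a Neumann-series/paraproduct argument bounds $\|\nabla P\|_{L^1_T(\dot{B}_{p,1}^{N/p-1})}$ by the $L^1_T(\dot{B}_{p,1}^{N/p-1})$-norm of the right-hand side.

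\emph{Step 2 (Stability and compactness).} Classical Besov transport estimates (with no loss of derivative thanks to $\operatorname{div}\tilde\upsilon=0$), maximal $L^1_T(\dot{B}_{p,1}^{N/p+1})$-regularity of the heat semigroup, and the Banach-algebra property of $\dot{B}_{p,1}^{N/p}$ for $p<\infty$ together show that, for $T$ small enough depending only on the initial norms, $\Phi$ stabilizes a suitable ball of $E_T$. To apply Schauder--Tychonoff I would equip $E_T$ with the product of weak topologies; because the equations themselves control $\partial_t\sigma$ and $\partial_t H$ in a weaker Besov space, Aubin--Lions together with a diagonal extraction on an exhaustion of $\mathbb{R}^N$ by balls gives relative compactness, while weak continuity of $\Phi$ follows from the paraproduct decomposition of the nonlinearities. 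A fixed point of $\Phi$ is the desired local solution of (\ref{bl-E3.1}). Uniqueness I would prove separately by an energy estimate on the difference of two solutions at one lower regularity level, the usual device for dispensing with the logarithmic loss in the transport bound.

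\emph{Step 3 (Global existence for $p=2$).} Given the local solution produced by Step 2 for the tighter data of the second claim, the global statement reduces to an a priori bound on
$$X(t):=\|\sigma\|_{L^\infty_t(B^{N/2}\cap B^{N/2-1})}+\|\upsilon\|_{L^\infty_t(B^{N/2-1})\cap L^1_t(B^{N/2+1})}+\|H\|_{L^\infty_t(B^{N/2}\cap B^{N/2-1})}+\|\nabla P\|_{L^1_t(B^{N/2-1})}$$
augmented by the $L^2_t$ pieces demanded by $\mathcal{H}^{N/2}$. Dyadic block energy estimates on the linearized system, combined with bilinear estimates in the Banach algebra $B^{N/2}$, would give an inequality of the form $X(t)\le C_0\,\eta+C_1\,X(t)^2$, which bootstraps to $X(t)\le 2C_0\eta$ for $\eta$ small and forces $T^{\ast}=+\infty$. \textbf{The main obstacle} is the absence of intrinsic dissipation for $H$: its transport equation alone cannot deliver the $L^2_t(B^{N/2})$-regularity required by $\mathcal{H}^{N/2}$. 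I would extract this dissipation, in the spirit of the constant-density works cited in the introduction, by exploiting the coupling between the source $\partial_k H^{ik}$ in the $\upsilon$-equation and the source $\nabla\upsilon$ in the $H$-equation, together with the algebraic constraints (\ref{bl-E1.2})--(\ref{bl-E1.3}), to recast the $(\upsilon,H)$ part of the linearization as a damped hyperbolic--parabolic system; the pressure commutators that arise are then absorbed using the smallness of $\sigma$ in $B^{N/2}$.
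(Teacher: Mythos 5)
Your proposal follows essentially the same route as the paper: a frozen-coefficient linear system defining the fixed-point map, pressure recovered from $\operatorname{div}((\sigma+1)\nabla P)=\operatorname{div}G$ and controlled by a smallness/Neumann-series argument, compactness from bounds on the time derivatives feeding a Schauder--Tychonoff (Hukuhara) argument, uniqueness by estimating the difference one derivative lower (the paper keeps the logarithmic loss and closes with Osgood's lemma rather than removing it), and, for $p=2$, exactly the paper's key idea of recasting $(\upsilon,H)$ via $d=-\Lambda^{-1}\nabla\upsilon$ and the identity (\ref{bl-E1.4}) as a damped hyperbolic--parabolic system to extract the missing $L^2_t(B^{N/2})$ dissipation for $H$. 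No substantive divergence from the paper's proof.
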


This paper is structured as follows. Section 2 is devoted to  recall
some basic results on Besov spaces. In Section 3, using
Schauder-Tychonoff fixed point argument, we prove the existence and
uniqueness of the local solution. At last, we concentrate on the
proof of the global existence of solution.

Notation: Throughout the paper, $C$ stands for a universal constant.
$\mathcal{Z}'({\mathbb{R}}^N)$ stands for the dual space
${\mathcal{Z}}({\mathbb{R}}^N)=\{ f\in \varphi({\mathbb{R}}^N):
D^{\alpha}\hat{f}(0)=0, \forall \alpha \in {\mathbb{N}}^N \
\text{multi-index}\}$. The notation $L_T^p(X)$ stands for the set of
measurable functions on $(0,T)$ with values in $X$, such that
$\|\cdot\|_X \in L^p (0,T)$, where $X$ be a Banach space,
$p\in[1,+\infty]$.

\section{Basic results on Besov spaces}\label{bl-S2}

\noindent
In this section, we mainly review some results on Besov spaces. At
first, we introduce the Littlewood-Paley theory. The homogeneous
Littlewood-Paley decomposition relies upon a dyadic partition of
unity: radial function $\varphi\in \varphi({\mathbb{R}}^N)$
supported in the shell ${\mathcal {C}}:=\{ \xi\in {\mathbb{R}}^N \ |
\ \frac{3}{4}\leq \xi\leq \frac{8}{3}\}$  such that
$$
\sum_{q\in \mathbb{Z}}\varphi(2^{-q}\xi)=1, \ \text{for all} \
\xi\neq 0.
$$
The homogeneous dyadic blocks and low frequency cut-off are defined by
$$
\Delta_q f:=\varphi(2^{-q}D)f,\quad\text{and}\quad S_q f:=\sum_{k\leq q-1}\Delta_k f \quad \text{for}\quad q\in\mathbb{Z}.
$$
It is easy to verify the following properties hold:
$$
\Delta_q \Delta_k f \equiv 0 \ \ \text{for}\ \ |q-k|\geq 2;\quad
\Delta_q (S_{k-1}f \Delta_k f) \equiv 0 \ \ \text{for}\ \ |q-k|\geq
5.
$$
The definition of the Besov space depend on the Littlewood-Paley
decomposition.

\begin{defn}\label{d2.1}
For $s\in \mathbb{R}$,  $ (p,r) \in[1,+\infty]^2$ and $ u\in
{\mathcal {Z}}'({\mathbb{R}}^N)$, we set
$$
\|u\|_{\dot{B}_{p,r}^s}:=\big{\|} 2^{js}\|\Delta_j u\|_{p} \big{\|}_{l^r},
$$
with the usual change if $r=+ \infty$. The homogeneous
Besov space $\dot{B}_{p,r}^s$ is defined by
$$
\dot{B}_{p,r}^s :=\big\{  u\in {\mathcal {Z}}'({\mathbb{R}}^N):
\|u\|_{\dot{B}_{p,r}^s} <+\infty      \big\}.
$$
\end{defn}
Next, we introduce the Besov-Chemin-Lerner space $\tilde{L}^q_T
(\dot{B}_{p,r}^s)$ which is initiated in [1].

\begin{defn}\label{d2.2}
For $s\in \mathbb{R}$,  $ (p,r,k) \in[1,+\infty]^3$ and $ T \in (0, +\infty]$.
The space $\tilde{L}_T^k (\dot{B}_{p,r}^s)$ is defined by
$$
\tilde{L}_T^k (\dot{B}_{p,r}^s):= \big\{  u\in L^k (0,T;{\mathcal
{Z}}'({\mathbb{R}}^N): \|u\|_{ \tilde{L}_T^k (\dot{B}_{p,r}^s)}
<+\infty      \big\},
$$
where
$$
\|u\|_{\tilde{L}_T^k (\dot{B}_{p,r}^s)}:=\Big{\{}\sum_{q\in
\mathbb{Z}} 2^{qrs}\Big( \int_0^T \|\Delta_q u(t)\|_{p}^k
dt\Big)^{\frac{r}{k}} \Big{\}}^{\frac{1}{r}},
$$
with the usual change if $r=+ \infty$.
\end{defn}
By virtue of Minkowski's inequality, we have
\begin{eqnarray*}
\|u\|_{\tilde{L}_T^k (\dot{B}_{p,r}^s)}&\leq& \|u\|_{L_T^k (\dot{B}_{p,r}^s)} \quad \text{if}\quad k\leq r,\\
\|u\|_{\tilde{L}_T^k (\dot{B}_{p,r}^s)}&\geq& \|u\|_{L_T^k (\dot{B}_{p,r}^s)} \quad \text{if}\quad k\geq r.
\end{eqnarray*}
In order to get the global existence result, we need to give the
definition of hybrid Besov spaces.
\begin{defn}\label{d4.1}
For $\mu >0, r \in [1,+\infty]$ and $s\in \mathbb{R}$, we define the
hybrid Besov space $\tilde{B}_{\mu}^{s,r}$
 as the set of functions $u$ such that
$$
\|u\|_{\tilde{B}_{\mu}^{s,r}}:=\sum_{q\in \mathbb{Z}} 2^{qs}\max(
\mu,2^{-q})^{1-\frac{2}{r}}\|\Delta_q u\|_{L^2}<\infty.
$$
\end{defn}
Let us  list some important properties of  the Besov spaces, see
[3].

\begin{lem}\label{l-2.1}
For $s\in \mathbb{R}, \ (p,r)\in [ 1, +\infty]$. The following
inequalities hold
true:\\
(i) there exists a universal constant $C$ such that
\begin{equation}\label{e2.0}
C^{-1}\|u\|_{\dot{B}_{p,r}^s}\leq \|\nabla u\|_{\dot{B}_{p,r}^{s-1}}\leq C \|u\|_{\dot{B}_{p,r}^s};
\end{equation}
(ii) if $s_1 , s_2 \leq \frac{N}{p}$ and $s_1+ s_2 > N \max\big( 0,
\frac{2}{p}-1 \big) $,
\begin{equation}\label{e2.1}
\|uv\|_{\dot{B}_{p,1}^{s_1 +s_2 -\frac{N}{p}}}\leq C
\|u\|_{\dot{B}_{p,1}^{s_1}}\|v\|_{\dot{B}_{p,1}^{s_2}};
\end{equation}
(iii) if $s_1 \leq \frac{N}{p}, s_2 < \frac{N}{p}$ and $s_1+ s_2 \geq
N \max\big( 0, \frac{2}{p}-1 \big) $,
\begin{equation}\label{e2.2}
\|uv\|_{\dot{B}_{p,\infty }^{s_1 +s_2 -\frac{N}{p}}}\leq C
\|u\|_{\dot{B}_{p,1}^{s_1}}\|v\|_{\dot{B}_{p,\infty}^{s_2}}.
\end{equation}
\end{lem}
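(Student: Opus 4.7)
The plan is to establish the three inequalities by standard Littlewood--Paley arguments, treating (i) directly via Bernstein's inequality and (ii)--(iii) via Bony's paraproduct decomposition, since these are all routine facts in the Besov calculus whose verification mirrors the treatment in reference [3].

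For (i), I would apply Bernstein's inequality block by block. Because $\mathrm{supp}\,\widehat{\Delta_j u}$ is contained in the annulus $\{2^{j-1}\le|\xi|\le 2^{j+1}\}$, one has $\|\nabla \Delta_j u\|_{L^p}\simeq 2^{j}\|\Delta_j u\|_{L^p}$ with constants independent of $j$. Multiplying through by $2^{j(s-1)}$ and taking the $\ell^r(\mathbb{Z})$ norm in $j$ produces the two-sided equivalence~\eqref{e2.0}.

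For (ii) and (iii), I would invoke Bony's decomposition
$$uv=T_uv+T_vu+R(u,v),$$
with $T_uv:=\sum_q S_{q-1}u\,\Delta_q v$ and $R(u,v):=\sum_q\sum_{|k-q|\le 1}\Delta_q u\,\Delta_k v$, and estimate each piece separately. The paraproducts $T_uv$ and $T_vu$ are handled by the quasi-orthogonality $\Delta_j(S_{k-1}f\,\Delta_k g)\equiv 0$ for $|k-j|\ge 5$ recorded just before Definition~\ref{d2.1}, combined with the embedding $\dot B_{p,1}^{N/p}\hookrightarrow L^\infty$; this yields the required bound whenever $s_1,s_2\le N/p$. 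For the remainder, the Fourier support of $\Delta_q u\,\Delta_k v$ with $|k-q|\le 1$ lies in a ball of radius $\sim 2^q$, so $\Delta_j R(u,v)$ is non-trivial only when $q\ge j-C$, and $\|\Delta_j R(u,v)\|_{L^{p/2}}\lesssim \sum_{k\ge j-C}\|\Delta_k u\|_{L^p}\|\Delta_k v\|_{L^p}$. Converting from $L^{p/2}$ back to $L^p$ via Bernstein costs the factor $2^{jN\max(0,2/p-1)}$, which is precisely the source of the hypothesis $s_1+s_2>N\max(0,2/p-1)$ (strict in (ii), non-strict in (iii) thanks to the weaker $\ell^\infty$ summability required on one of the factors). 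Once the three pieces are combined, summing in $j$ with weights $2^{j(s_1+s_2-N/p)}$ delivers~\eqref{e2.1} and~\eqref{e2.2}.

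The main obstacle is the remainder estimate in the range $p>2$: the additional Bernstein loss tightens the admissible exponents, and the subtle difference between the strict bound $s_1+s_2>N(2/p-1)$ in (ii) and the non-strict one in (iii) must be tracked by carefully distinguishing whether the outer $j$-summation is taken in $\ell^1$ or $\ell^\infty$. Provided this bookkeeping is done correctly, the rest of the argument is purely mechanical.
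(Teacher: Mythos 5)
The paper does not prove this lemma at all: it is stated as a collection of standard Besov-space facts and attributed to Danchin's lecture notes [3]. Your outline is exactly the argument that underlies that citation — Bernstein's inequality block by block for (i), and Bony's decomposition with separate paraproduct and remainder estimates for (ii)--(iii) — and as a strategy it is correct. Two concrete slips in your bookkeeping should be fixed, though, because following your recipe literally would break down. First, you have the ranges reversed: since $2/p-1>0$ exactly when $p<2$, the ``additional Bernstein loss'' $N(2/p-1)$ and the resulting tightening of the admissible exponents occur for $p<2$, not for $p>2$; moreover, for $p<2$ the quantity $\|\Delta_q u\,\Delta_k v\|_{L^{p/2}}$ lives in a quasi-normed space, so the remainder cannot be run through $L^{p/2}$ there and one must use a different H\"older pairing (for $p\ge 2$ the $L^{p/2}$ route works and the Bernstein cost $2^{jN/p}$ is exactly absorbed by the weight $2^{j(s_1+s_2-N/p)}$, leaving the condition $s_1+s_2>0=N\max(0,2/p-1)$). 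Second, the strict inequality $s_2<N/p$ in (iii) is not produced by the remainder: it comes from the paraproduct $T_v u$, where $\|S_{q-1}v\|_{L^\infty}\lesssim 2^{q(N/p-s_2)}\|v\|_{\dot B^{s_2}_{p,\infty}}$ requires $s_2<N/p$ strictly when $v$ carries only $\ell^\infty$ summability, whereas in (ii) the $\ell^1$ summability permits $s_2=N/p$. With those corrections the proof is the standard one and goes through.
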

\begin{lem}\label{l-2.2}
Let $1\leq k, p,q,q_1,q_2\leq \infty$ with
$\frac{1}{q_1}+\frac{1}{q_2}=\frac{1}{q}$, then\\
(i) if $f\in \tilde{L}_T^{q_1}(\dot{B}_{p,1}^{s_1}), g\in
\tilde{L}_T^{q_2}(\dot{B}_{p,1}^{s_2})$ and  $s_1 , s_2 \leq
\frac{N}{p}$, $s_1+ s_2
> N \max\big( 0, \frac{2}{p}-1 \big) $,
\begin{equation}\label{e2.3}
\|fg\|_{\tilde{L}_T^{q}(\dot{B}_{p,1}^{s_1 +s_2-\frac{N}{p}})}\leq C
\|f\|_{\tilde{L}_T^{q_1}(\dot{B}_{p,1}^{s_1})}\|g\|_{\tilde{L}_T^{q_2}(\dot{B}_{p,1}^{s_2})};
\end{equation}
(ii) if $f\in \tilde{L}_T^{q_1}(\dot{B}_{p,1}^{s_1}), g\in
\tilde{L}_T^{q_2}(\dot{B}_{p,\infty}^{s_2})$ and $s_1 \leq
\frac{N}{p}, s_2 < \frac{N}{p}$, $s_1+ s_2 \geq N \max\big( 0,
\frac{2}{p}-1 \big) $,
\begin{equation}\label{e2.4}
\|fg\|_{\tilde{L}_T^{q}(\dot{B}_{p,\infty }^{s_1 +s_2
-\frac{N}{p}})}\leq C
\|f\|_{\tilde{L}_T^{q_1}(\dot{B}_{p,1}^{s_1})}\|g\|_{\tilde{L}_T^{q_2}(\dot{B}_{p,\infty}^{s_2})}.
\end{equation}
(iii) for $f\in \tilde{L}_T^{k}(\dot{B}_{p,1}^{s}), s\in \mathbb{R}$ and $\epsilon \in (0,1]$, we have
\begin{equation}\label{e2.5}
\|f\|_{\tilde{L}_T^{k}(\dot{B}_{p,1 }^{s})}\leq C
\frac{\|f\|_{\tilde{L}_T^{k}(\dot{B}_{p,\infty}^{s})}}{\epsilon}\log
\Big( e +
\frac{\|f\|_{\tilde{L}_T^{k}(\dot{B}_{p,\infty}^{s-\epsilon})}+
\|f\|_{\tilde{L}_T^{k}(\dot{B}_{p,\infty}^{s+\epsilon})}}{\|f\|_{\tilde{L}_T^{k}(\dot{B}_{p,\infty}^{s})}}\Big).
\end{equation}

\end{lem}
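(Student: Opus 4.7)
The plan is to treat parts (i), (ii), and (iii) separately, since they are of different natures: the first two are product (paraproduct) laws in Chemin-Lerner spaces, while the third is a logarithmic interpolation between $\dot{B}_{p,\infty}^s$-type norms and the $\dot{B}_{p,1}^s$-norm. Parts (i)-(ii) are time-dependent versions of (\ref{e2.1})-(\ref{e2.2}), so my strategy is to reduce to the static estimates of Lemma \ref{l-2.1} by an extra application of H\"older's inequality in time.

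For parts (i) and (ii), I would invoke Bony's paraproduct decomposition $fg=T_fg+T_gf+R(f,g)$ with $T_fg:=\sum_q S_{q-1}f\,\Delta_q g$ and $R(f,g):=\sum_{|q-q'|\le 1}\Delta_q f\,\Delta_{q'}g$. Applying $\Delta_j$ to each piece and exploiting the spectral localisation properties recalled in Section \ref{bl-S2} (in particular $\Delta_j(S_{q-1}f\,\Delta_q g)\equiv 0$ for $|q-j|\ge 5$, and an analogous restriction for the remainder when $p\le\infty$), we get for each fixed $t$ a dyadic bound of $\|\Delta_j(fg)\|_{L^p_x}$ by sums of products $\|\Delta_q f\|_{L^{p_1}_x}\|\Delta_{q'}g\|_{L^{p_2}_x}$, which, combined with Bernstein's inequality, is precisely the pointwise-in-$t$ input that yields (\ref{e2.1})-(\ref{e2.2}). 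The new ingredient is H\"older in time with $1/q_1+1/q_2=1/q$ applied inside each product, which replaces $L^\infty_TL^p_x$ by $L^{q_1}_TL^p_x$ and $L^{q_2}_TL^p_x$ factors and yields $L^q_TL^p_x$ after multiplication. Reorganising the $2^{js}$ weights and summing in $\ell^1$ (respectively $\ell^\infty$) as in the classical proof of Lemma \ref{l-2.1} then produces (\ref{e2.3}) (respectively (\ref{e2.4})). The index conditions $s_1,s_2\le N/p$ and $s_1+s_2>N\max(0,2/p-1)$ (with $>$ replaced by $\ge$ in (ii)) enter exactly as in the static case, to guarantee the convergence of the two paraproducts and of the remainder.

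For part (iii), set $a_j:=2^{js}\|\Delta_j f\|_{L^k_TL^p}$, so that $\|f\|_{\tilde L^k_T\dot{B}_{p,1}^s}=\sum_j a_j$, $\|f\|_{\tilde L^k_T\dot{B}_{p,\infty}^s}=\sup_j a_j$, and $\|f\|_{\tilde L^k_T\dot{B}_{p,\infty}^{s\pm\epsilon}}=\sup_j 2^{\pm j\epsilon}a_j$. I split $\sum_j a_j$ into $|j|\le N_0$, $j>N_0$, $j<-N_0$ for an integer $N_0$ chosen below. The middle block contributes at most $(2N_0+1)\|f\|_{\tilde L^k_T\dot{B}_{p,\infty}^s}$. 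For $j>N_0$ I write $a_j=2^{-j\epsilon}\bigl(2^{j\epsilon}a_j\bigr)$ and sum the geometric series, using $(1-2^{-\epsilon})^{-1}\le C/\epsilon$ for $\epsilon\in(0,1]$, which gives $(C/\epsilon)\,2^{-N_0\epsilon}\|f\|_{\tilde L^k_T\dot{B}_{p,\infty}^{s+\epsilon}}$; the symmetric bound holds for $j<-N_0$ with $s-\epsilon$ in place of $s+\epsilon$. Taking $N_0$ to be the smallest integer exceeding $\epsilon^{-1}\log_2 R$, where $R$ is the ratio on the right-hand side of (\ref{e2.5}), balances the two contributions and delivers the advertised factor $\epsilon^{-1}\log(e+R)$.

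The main obstacle is essentially bookkeeping rather than substantive difficulty: checking that Bony's decomposition survives in Chemin-Lerner spaces requires being careful about the order in which one sums in $j$ and integrates in $t$ (this is exactly why the tilded spaces were introduced in [1]), and the subtlety in (iii) lies in selecting $N_0$ sharply enough to recover a logarithmic, rather than polynomial, dependence on the norm ratio. Both points are well-understood technicalities that follow the template of Bahouri-Chemin-Danchin and of Danchin's work on density-dependent fluids, so the above outline can be turned into a full proof without further surprises.
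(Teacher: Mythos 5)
Your outline is correct. Note, however, that the paper does not actually prove this lemma: Lemmas \ref{l-2.1} and \ref{l-2.2} are quoted from reference [3] (Danchin's lecture notes) without proof, so there is no argument in the paper to compare yours against. The route you sketch --- Bony's decomposition with an extra H\"older inequality in time applied blockwise for (i)--(ii), and the three-block splitting of the dyadic sum with $N_0\sim\epsilon^{-1}\log_2 R$ for (iii) --- is precisely the standard proof from that reference, and the points you flag (the strict inequality $s_2<\frac{N}{p}$ forced by $\dot B^{N/p}_{p,\infty}\not\hookrightarrow L^\infty$, the relaxation of the sum condition to $\ge$ in (ii) because the output regularity is only measured in $\ell^\infty$, and the bound $(1-2^{-\epsilon})^{-1}\le C/\epsilon$ that produces the $\epsilon^{-1}$ prefactor) are all handled correctly.
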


\begin{lem}\label{a.3}(see [4])
Let $0<R_1<R_2$ and $\psi\in {\mathcal {C}}_0^\infty
({\mathbb{R}}^N)$ be supported in the annulus $C(0,R_1,R_2). $ For
all indices $s,t,p,\alpha_1,\alpha_2$ and $\alpha_2'$ such that
$1\leq p, \alpha_1,\alpha_2,\alpha_2' \leq +\infty, \ 1/\alpha_2 +
1/ \alpha_2' =1/ \alpha_1, \ t\leq N/ p +1. \ 1\leq s \leq N/p +1$
and $s+t>1$, there exists $(c_q)_{q\in \mathbb{Z}}$ such that
$\sum_{q} c_q \leq 1$ and
\begin{eqnarray}\label{j}
&& \|\text{div} [A,\psi(2^{-q}D)]\nabla B\|_{L_T^{\alpha_1}(L^p)} \nonumber\\
&& \leq
Cc_q 2^{-q(s+t-2-N/p)}\|\nabla
A\|_{\tilde{L}_T^{\alpha_2'}(\dot{B}_p^{s-1})}\|\nabla
B\|_{\tilde{L}_T^{\alpha_2}(\dot{B}_p^{t-1})}
\end{eqnarray}
\end{lem}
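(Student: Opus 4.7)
The plan is to use the integral kernel representation of the Fourier multiplier $\psi_q:=\psi(2^{-q}D)$. Writing $\psi_q g(x)=\int h_q(x-y)g(y)\,dy$ with $h_q(z)=2^{qN}h(2^q z)$ and $h=\mathcal{F}^{-1}\psi$, the commutator acting on $\nabla B$ reads
$$
[A,\psi_q]\nabla B(x)=\int h_q(x-y)\bigl(A(x)-A(y)\bigr)\nabla B(y)\,dy.
$$
I would then split $A$ as $A=S_{q-N_0}A+\sum_{k\ge q-N_0}\Delta_k A=:A^L_q+A^H_q$ for a fixed integer $N_0$ large enough that the frequency support of $A^L_q$ is separated from the annulus $2^q\mathcal{C}$ where $\widehat{\psi_q g}$ lives, and treat the two pieces independently before taking divergence.

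For the low-frequency piece $A^L_q$ the key trick is a first-order Taylor expansion $A^L_q(x)-A^L_q(y)=(x-y)\cdot\int_0^1\nabla A^L_q(y+\tau(x-y))\,d\tau$. Substituted into the kernel representation, the factor $(x-y)$ combines with $h_q(x-y)$ to produce a new kernel of $L^1$-norm $\mathcal{O}(2^{-q})$, thus gaining one derivative on $A$. The outer divergence contributes a factor $\sim 2^q$ (since the output is essentially localized at frequency $2^q$ by the support of $\psi$), so the two powers combine to give a neutral prefactor. Decomposing $\nabla B$ into its dyadic blocks $\Delta_j\nabla B$ with $j$ close to $q$ and applying Bernstein to pass from $L^\infty$-norms of $\Delta_k\nabla A$ (for $k\le q-N_0$) to $L^p$-norms produces, after summing over $k$ against the Besov weights, a bound of the form $c_q\,2^{-q(s+t-2-N/p)}\|\nabla A\|_{\dot B_p^{s-1}}\|\nabla B\|_{\dot B_p^{t-1}}$ with $\{c_q\}$ realized as an $\ell^1$-convolution of the two Besov sequences of $\nabla A$ and $\nabla B$, hence $\sum_q c_q\le 1$.

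For the high-frequency piece $A^H_q$ the Taylor trick fails, so I would write $[A^H_q,\psi_q]\nabla B=A^H_q\,\psi_q\nabla B-\psi_q(A^H_q\nabla B)$ and estimate each summand directly using the paraproduct/product estimates of Lemma~\ref{l-2.2} together with Bernstein to pass between $L^p$ and $L^\infty$ on dyadic blocks. The hypotheses $s,t\le N/p+1$, $s\ge 1$, and $s+t>1$ are exactly what is needed so that the resulting double geometric series indexed by pairs $(k,j)$ with $k\ge q-N_0$ converge and reproduce the same $2^{-q(s+t-2-N/p)}$ scaling, with an $\ell^1$ coefficient $c_q$. The last step is to take $L^{\alpha_1}_T$ norms and apply H\"older in time with $1/\alpha_1=1/\alpha_2+1/\alpha_2'$ to distribute the contributions of $\nabla A$ and $\nabla B$ into their respective tilde-norms. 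The main obstacle will be the high-frequency piece: the two summands $A^H_q\psi_q\nabla B$ and $\psi_q(A^H_q\nabla B)$ are individually much larger than their commutator difference, so one must carefully track the near-diagonal frequency interactions so that the aggregated coefficients still satisfy $\sum_q c_q\le 1$. The borderline condition $s+t>1$ is precisely the convergence threshold for the remainder-type term in Bony's paraproduct, which is where this estimate is delicate.
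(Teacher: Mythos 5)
The paper does not prove this lemma at all: it is quoted verbatim from reference [4] (Danchin's work on compressible viscous heat-conductive gases), so there is no in-paper argument to compare against. Your outline is, in substance, the standard proof from that source: splitting $A$ into low and high frequencies relative to $2^q$ is Bony's decomposition in disguise (the low-frequency piece is the paraproduct term $[T_A,\psi_q]\nabla B$, where the kernel representation plus first-order Taylor expansion yields the crucial factor $2^{-q}\|\nabla A^L_q\|_{L^\infty}$; the high-frequency piece collects $T_{\nabla B}A$ and the remainder $R(A,\nabla B)$), and the bookkeeping with Bernstein, the near-diagonal restriction $j\sim q$, and H\"older in time for the tilde-norms is exactly right. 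Two small corrections. First, for the high-frequency piece no cancellation between $A^H_q\,\psi_q\nabla B$ and $\psi_q(A^H_q\nabla B)$ is needed or exploited: each summand is \emph{individually} bounded by the right-hand side, because $\|\Delta_k A\|_{L^\infty}\lesssim 2^{-k}\|\Delta_k\nabla A\|_{L^\infty}$ already converts $A$ into $\nabla A$ at high frequencies; the whole point of the splitting is that the delicate Taylor cancellation is confined to the low-frequency part. Second, the outer divergence costs a factor $2^q$ only on terms that are spectrally localized in an annulus of radius $\sim 2^q$; this holds for $A^L_q\,\psi_q\nabla B$, for $\psi_q(A^L_q\nabla B)$ and for $\psi_q(A^H_q\nabla B)$, but \emph{not} for $A^H_q\,\psi_q\nabla B$, whose spectrum is spread out by the high frequencies of $A$. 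For that term you must apply the Leibniz rule, $\mathrm{div}(A^H_q\,\psi_q\nabla B)=\nabla A^H_q\cdot\psi_q\nabla B+A^H_q\,\psi_q\Delta B$, and estimate the two resulting pieces separately; this is where the constraints $s\le N/p+1$, $t\le N/p+1$ and $s+t>1$ enter to make the sums over $k\ge q-N_0$ converge with an $\ell^1$ coefficient. With these adjustments your sketch is a faithful reconstruction of the cited proof.
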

Now we prove a useful estimate.

\begin{lem}\label{12.3}
Let $1<p\leq N, 1\leq \alpha \leq +\infty$, and $u$ be a solution of
\begin{equation}\label{ty}
    -\text{div} (a\nabla u)=f  ,
    \end{equation}
  where  the diffusion coefficient $a(t,x)\in \tilde{L}_T^\infty
  (\dot{B}_{p,1}^{\frac{N}{p}})$ and
 $ 0<\underline{a}<a(t,x)<\overline{a}.$ Then  the following estimate holds
   \begin{equation}\label{le}
\|\nabla u \|_{\tilde{L}_T^\alpha
(\dot{B}_{p,1}^{\frac{N}{p}-1})}\leq C_p \| f \|_{\tilde{L}_T^\alpha
(\dot{B}_{p,1}^{\frac{N}{p}-2})}+
 C_p \|\nabla a \|_{\tilde{L}_T^\infty (\dot{B}_{p,1}^{\frac{N}{p}-1})}
 \|\nabla u \|_{\tilde{L}_T^\alpha (\dot{B}_{p,1}^{\frac{N}{p}-1})}.
\end{equation}
\end{lem}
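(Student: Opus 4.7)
The plan is to rewrite (\ref{ty}) as a perturbation of the pure Laplacian, then combine the elliptic estimate for $-\Delta$ in homogeneous Besov spaces with the product rule of Lemma~\ref{l-2.2}(i). This turns the estimate into a bootstrap inequality for $\|\nabla u\|$ and can be carried out without appealing to the commutator bound of Lemma~\ref{a.3}.

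The rewrite rests on the algebraic identity $\text{div}(a\nabla u)=\Delta u+\text{div}((a-1)\nabla u)$, so that (\ref{ty}) is equivalent to
$$
-\Delta u=f+\text{div}\big((a-1)\nabla u\big).
$$
Take the $\tilde L^\alpha_T(\dot B^{N/p-2}_{p,1})$-norm of both sides. On the left, writing $\nabla u=-\nabla\Delta^{-1}(-\Delta u)$ and applying Lemma~\ref{l-2.1}(i) componentwise to $\nabla u$ (i.e., using boundedness of the degree $-1$ multiplier $-i\xi/|\xi|^2$ on each dyadic annulus) yields $\|\Delta u\|_{\tilde L^\alpha_T(\dot B^{N/p-2}_{p,1})}\sim \|\nabla u\|_{\tilde L^\alpha_T(\dot B^{N/p-1}_{p,1})}$. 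On the right, Lemma~\ref{l-2.1}(i) also provides $\|\text{div}((a-1)\nabla u)\|_{\tilde L^\alpha_T(\dot B^{N/p-2}_{p,1})}\le C\|(a-1)\nabla u\|_{\tilde L^\alpha_T(\dot B^{N/p-1}_{p,1})}$.

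For the remaining product, invoke Lemma~\ref{l-2.2}(i) with $s_1=N/p$, $s_2=N/p-1$, $q_1=\infty$, $q_2=\alpha$; the structural hypotheses $s_1,s_2\le N/p$ and $s_1+s_2>N\max(0,2/p-1)$ hold because $N\ge 2$ and $p\le N$, giving
$$
\|(a-1)\nabla u\|_{\tilde L^\alpha_T(\dot B^{N/p-1}_{p,1})}\le C\|a-1\|_{\tilde L^\infty_T(\dot B^{N/p}_{p,1})}\|\nabla u\|_{\tilde L^\alpha_T(\dot B^{N/p-1}_{p,1})}.
$$
Because the homogeneous dyadic blocks $\Delta_q$ annihilate constants, $\|a-1\|_{\dot B^{N/p}_{p,1}}=\|a\|_{\dot B^{N/p}_{p,1}}\sim\|\nabla a\|_{\dot B^{N/p-1}_{p,1}}$ by Lemma~\ref{l-2.1}(i). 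Chaining the three estimates yields exactly (\ref{le}).

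I anticipate no serious obstacle. The bounds $0<\underline{a}<a<\overline{a}$ enter only implicitly, to guarantee that a solution $u$ of (\ref{ty}) exists in the first place; the estimate itself is purely algebraic. The only mild subtleties are interpreting $a-1$ inside the homogeneous Besov seminorm (resolved since $\Delta_q$ kills constants) and justifying the elliptic equivalence $\|\Delta u\|_{\dot B^{N/p-2}_{p,1}}\sim\|\nabla u\|_{\dot B^{N/p-1}_{p,1}}$, which follows from Riesz-type Fourier multipliers being uniformly bounded on each dyadic annulus. A more commutator-driven alternative — dyadically localize (\ref{ty}), perform an $L^p$ energy estimate on each $\Delta_q u$ exploiting $a\ge\underline{a}$, and apply Lemma~\ref{a.3} to absorb $\text{div}([\Delta_q,a]\nabla u)$ — would also work and leads to the same conclusion.
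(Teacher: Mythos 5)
Your argument is correct for the setting in which the lemma is actually used, but it follows a genuinely different route from the paper. The paper does not perturb off the Laplacian at all: it localizes the equation with $\Delta_j$, writes $-\mathrm{div}(a\nabla\Delta_j u)=\Delta_j f+R_j$ with $R_j=-\mathrm{div}([a,\Delta_j]\nabla u)$, tests against $|\Delta_j u|^{p-2}\Delta_j u$, and invokes the coercivity inequality $C\underline{a}\frac{p-1}{p^2}2^{2j}\|\Delta_j u\|_{L^p}^p\leq -\int\mathrm{div}(a\nabla\Delta_j u)|\Delta_j u|^{p-2}\Delta_j u\,dx$ from Danchin's work, then controls $R_j$ by the commutator estimate (\ref{j}) of Lemma \ref{a.3} with $s=t=N/p$. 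That is exactly the ``commutator-driven alternative'' you sketch in your last sentence. The trade-offs are real. The paper's energy method uses the lower bound $\underline{a}>0$ and the restriction $p>1$ in an essential way (the constant degrades like $\underline{a}^{-1}$), but it only ever sees $\nabla a$ through the commutator, so it is completely insensitive to the constant background of $a$. Your perturbative argument is cleaner and gives constants independent of $\underline{a}$, but it silently normalizes that background to $1$: to apply the product law (\ref{e2.3}) to $(a-1)\nabla u$ you need $a-1$ to be a genuine element of $\dot{B}_{p,1}^{N/p}$, i.e.\ equal to the convergent sum of its dyadic blocks, not merely to have finite homogeneous seminorm. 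The identity $\|a-1\|_{\dot{B}_{p,1}^{N/p}}=\|a\|_{\dot{B}_{p,1}^{N/p}}$ does not rescue this: if $a$ tended to a constant $c\neq 1$ at infinity, $(a-1)\nabla u$ would contain the term $(c-1)\nabla u$, which is invisible to the seminorm of $a-1$ but certainly not negligible in $\dot{B}_{p,1}^{N/p-1}$. In the paper's application the coefficient is $1+\sigma$ with $\sigma\in\dot{B}_{p,1}^{N/p}$, so your normalization is exactly right and the proof goes through; for a hypothetical $a$ with a different limit $c>0$ at infinity one would pull out $c\Delta u$ instead and divide by $c$. It would be worth stating this normalization explicitly. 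Your elliptic equivalence $\|\nabla u\|_{\dot{B}_{p,1}^{N/p-1}}\sim\|\Delta u\|_{\dot{B}_{p,1}^{N/p-2}}$ also needs the Riesz-multiplier argument you mention rather than (\ref{e2.0}) alone, but that is standard on homogeneous Besov spaces over $\mathcal{Z}'$.
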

\begin{proof}
The proof of this lemma lies on standard energy estimates for dyadic
blocks.

Applying $\Delta_j$ to (\ref{ty}) and denoting $R_j := -\text{div}
([a,\Delta_j]\nabla u)$,  we obtain
$$
-\text{div}(a\nabla \Delta_j u)=\Delta_j f + R_j.
$$
Multiplying both sides of the above equation by  $|\Delta_j u|^{p-2}
\Delta_j u$ and integrating over ${\mathbb{R}}^N$, we have
$$
-\int_{{\mathbb{R}}^N} \text{div}(a\nabla \Delta_j u) | \Delta_j
u|^{p-2}\Delta_j u dx =\int_{{\mathbb{R}}^N }(\Delta_j f + R_j)|
\Delta_j u|^{p-2}\Delta_j u dx.
$$
From [4], we have the following inequality
$$
C\underline{a}\big( \frac{p-1}{p^2} \big) 2^{2j}\|\Delta_j
u\|_{L^p}^p \leq -\int_{{\mathbb{R}}^N} \text{div}(a\nabla \Delta_j
u) | \Delta_j u|^{p-2}\Delta_j u dx,
$$
using the above inequality and H$\ddot{o}$lder inequality,  we get
$$
C_p \underline{a} 2^{2j}\|\Delta_j u\|_{L^p}^p \leq \|\Delta_j
u\|_{L^p}^{p-1}(\|R_j\|_{L^p}+\|\Delta_j f\|_{L^p}).
$$
By integration with respect to $t$ over $(0,T)$,  we have
\begin{eqnarray*}
C_p \|\nabla u \|_{\tilde{L}_T^\alpha
(\dot{B}_{p,1}^{\frac{N}{p}-1})} \leq \| f\|_{\tilde{L}_T^\alpha
(\dot{B}_{p,1}^{\frac{N}{p}-2})}+
  \|\nabla a \|_{\tilde{L}_T^\infty (\dot{B}_{p,1}^{\frac{N}{p}-1})}
 \|\nabla u \|_{\tilde{L}_T^\alpha (\dot{B}_{p,1}^{\frac{N}{p}-1})},
\end{eqnarray*}
here we have  used (\ref{e2.0}) and (\ref{j}).
\end{proof}

\begin{rem}\label{rk1} We also have an estimate for solutions of
(\ref{ty}) in $\tilde{L}_T^\alpha
(\dot{B}_{p,\infty}^{\frac{N}{p}})$, with the same assumptions as in
Lemma \ref{12.3}, the following inequality holds:
 \begin{equation}\label{les}
\|\nabla u \|_{\tilde{L}_T^\alpha (\dot{B}_{p,\infty}^{-1})}\leq C_p
\| f \|_{\tilde{L}_T^\alpha (\dot{B}_{p,\infty}^{-2})}+
 C_p \|\nabla a \|_{\tilde{L}_T^\infty (\dot{B}_{p,\infty}^{0})}
 \|\nabla u \|_{\tilde{L}_T^\alpha (\dot{B}_{p,\infty}^{-1})}.
\end{equation}
\end{rem}
The proof goes along the lines of the proof of Lemma \ref{12.3}, and
use  the following enequality which can be found in [5],
$$
\sup_{j}2^{-2j}\|\text{div}\ ([a,\Delta_j]\cdot \nabla u)\|_{L^p}
\leq C\sup_{j}2^{-2j}\| \nabla a\|_{\dot{B}_{p,\infty}^0}\| \nabla
u\|_{\dot{B}_{p,\infty}^{-1}}.
$$
We state the classical estimates in Besov space for the transport
and heat equations, see [3].

\begin{prop}\label{p2.1}
Let  $s\in \big( -1-N\min(\frac{1}{p},\frac{1}{p'}),1+\frac{N}{p}
\big)$, and $1 \leq p,r \leq+\infty $, and $s=1+ \frac{N}{p}$, if
$r=1 $. Let $\upsilon$ be a solenoidal vector such that $\nabla
\upsilon \in L_T^1 (\dot{B}_{p,r}^{\frac{N}{p}}\cap L^{\infty})$.
Assume that $u_0 \in \dot{B}_{p,r}^{s}, \ g\in L_T^1
(\dot{B}_{p,r}^{s})$, and $f$ solves
 \begin{equation}
    \left\{
     \renewcommand{\arraystretch}{1.25}
    \begin{array}{l}
        \partial_t u+  \upsilon\cdot \nabla u=g, \\
 u|_{t=0} = u_0 .
    \end{array}
    \right.\label{e-2.3}
    \end{equation}
 Then for any $t\in [0,T] $, we have
$$
\|u\|_{\tilde{L}_T^{\infty} (\dot{B}_{p,r}^s)}\leq e^{CV(t)}
\Big{(}\|u_0 \|_{ \dot{B}_{p,r}^s} +  \int_0^t
e^{-CV(\tau)}\|g(\tau)\|_{\dot{B}_{p,r}^s} d \tau
 \Big{)},
$$
where  $V(t):=\int_0^t \|\nabla
\upsilon(\tau)\|_{\dot{B}_{p,r}^{{N}/{p}}\cap L^{\infty}} d\tau$. If
$r<+\infty$, then $u\in {\mathcal
{C}}([0,T];\dot{B}_{p,r}^s)$.
\end{prop}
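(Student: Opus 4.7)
The plan is to localize the transport equation in frequency and run an energy estimate on each dyadic block. First I apply $\Delta_q$ to (\ref{e-2.3}) to get
\[
\partial_t \Delta_q u + \upsilon\cdot \nabla \Delta_q u = \Delta_q g + R_q, \qquad R_q := [\upsilon\cdot\nabla,\Delta_q]u.
\]
Multiplying by $|\Delta_q u|^{p-2}\Delta_q u$, integrating over $\mathbb{R}^N$, and exploiting $\text{div}\,\upsilon=0$ to discard the transport contribution, I obtain
\[
\frac{1}{p}\frac{d}{dt}\|\Delta_q u\|_{L^p}^p \leq \|\Delta_q u\|_{L^p}^{p-1}\bigl(\|\Delta_q g\|_{L^p} + \|R_q\|_{L^p}\bigr).
\]
Dividing by $\|\Delta_q u\|_{L^p}^{p-1}$ and integrating in time yields, for each $q$, an $L^p$ bound in which the commutator $R_q$ plays the role of a forcing term alongside $\Delta_q g$.

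The key step is controlling $R_q$. Decomposing $\upsilon\cdot\nabla u$ via Bony's paradifferential calculus into the paraproducts $T_\upsilon\nabla u$, $T_{\nabla u}\upsilon$ and the remainder $R(\upsilon,\nabla u)$, and using the support properties of the dyadic blocks together with first-order Taylor expansion of the convolution kernel of $\Delta_q$, one obtains a commutator estimate of the form
\[
\Bigl\|2^{qs}\|R_q\|_{L^p}\Bigr\|_{\ell^r} \leq C\,\|\nabla \upsilon\|_{\dot{B}_{p,r}^{N/p}\cap L^\infty}\,\|u\|_{\dot{B}_{p,r}^s}.
\]
The precise range $s\in\bigl(-1-N\min(1/p,1/p'),\,1+N/p\bigr)$ comes exactly from the convergence conditions for the three paraproduct pieces, and the endpoint $s=1+N/p$ is admissible only for $r=1$ because otherwise the embedding $\dot{B}_{p,r}^{N/p}\hookrightarrow L^\infty$ fails. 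This is the technical core of the argument.

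Multiplying the dyadic estimate by $2^{qs}$, taking the $\ell^r(\mathbb{Z})$ norm, and inserting the commutator bound gives
\[
\|u(t)\|_{\dot{B}_{p,r}^s} \leq \|u_0\|_{\dot{B}_{p,r}^s} + \int_0^t \|g(\tau)\|_{\dot{B}_{p,r}^s}\,d\tau + C\int_0^t V'(\tau)\,\|u(\tau)\|_{\dot{B}_{p,r}^s}\,d\tau,
\]
after which Gronwall's lemma delivers the claimed exponential estimate. To upgrade to the $\tilde L^\infty_T(\dot{B}_{p,r}^s)$ norm (rather than its essential-supremum counterpart), I would note that the computation above already gives a bound uniform in $t\in[0,T]$ once one sums in $\ell^r$. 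The continuity statement for $r<+\infty$ then follows by a standard approximation: mollify $u_0$ and $g$ to obtain smooth classical solutions continuous into $\dot{B}_{p,r}^s$, and pass to the limit using the estimate just derived, which is stable enough under such approximation. The only real obstacle throughout is the bookkeeping in the commutator estimate needed to extract precisely the stated range of $s$; everything else is routine dyadic summation plus Gronwall.
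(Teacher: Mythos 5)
Your argument is correct and is precisely the classical proof of this transport estimate: the paper itself gives no proof but cites reference [3] (Danchin's lecture notes), where the result is established exactly by your route --- dyadic localization, an $L^p$ energy estimate on each block using $\mathrm{div}\,\upsilon=0$, the Bony-decomposition commutator bound yielding the stated range of $s$ and the $r=1$ endpoint, and Gronwall. Nothing further is needed.
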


\begin{prop}\label{p2.2}
Let $s\in\mathbb{R}$  and $1\leq k,p,r \leq +\infty$. Assume that
$u_0 \in \dot{B}_{p,r}^s , f\in \tilde{L}_T^{k}
(\dot{B}_{p,r}^{s-2+\frac{2}{p}})$,  and $u$ solves
\begin{equation}
    \left\{
     \renewcommand{\arraystretch}{1.25}
    \begin{array}{l}
        \partial_t u -  \mu \Delta u=f, \\
 u|_{t=0} = u_0 .
    \end{array}
    \right.\label{e-2.4}
    \end{equation}
Denote $1/k_2 =1+1/k_1 -1/k$. Then there exist  positive constants
$c$ and $C=C(N)$ such that for all $k_1 \in [k,+\infty] $, we have
\begin{eqnarray*}
\|u\|_{\tilde{L}_T^{k_1} (\dot{B}_{p,1}^{s+2/ k_1})}&\leq& C
\Big{\{}\sum_{q\in {\mathbb{Z}}}2^{qs}\|\Delta_q u_0 \|_{L^p } \Big(
\frac{1-e^{-c\mu T 2^{2q}k_1}}{c\mu k_1} \Big)^{\frac{1}{k_1}}\\
 &&+\sum_{q\in {\mathbb{Z}}}2^{q(s-2+2/k)}\|\Delta_q f \|_{L^k_T (L^p) }
\Big( \frac{1-e^{-c\mu T 2^{2q}k_2}}{c\mu k_2} \Big)^{\frac{1}{k_2}}
\Big{\}}.
\end{eqnarray*}
Moreover, there holds
$$
\mu^{\frac{1}{k_1}}\|u\|_{\tilde{L}_T^{k_1} (\dot{B}_{p,r}^{s+2/
k_1})}\leq C\Big( \|u_0\|_{ \dot{B}_{p,r}^{s}} + \mu^{\frac{1}{k}
-1}\|f \|_{\tilde{L}_T^{k} (\dot{B}_{p,r}^{s-2 +2/ k})}\Big).
$$
If $r<+\infty$, then $u$ belongs to  ${\mathcal
{C}}([0,T];\dot{B}_{p,r}^s)$.
\end{prop}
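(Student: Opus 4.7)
The plan is to reduce to a dyadic estimate and exploit the exponential smoothing of the heat semigroup on frequency-localized functions. Applying $\Delta_q$ to \eqref{e-2.4}, Duhamel's formula gives $\Delta_q u(t)=e^{\mu t\Delta}\Delta_q u_0+\int_0^t e^{\mu(t-\tau)\Delta}\Delta_q f(\tau)\,d\tau$. Since $\Delta_q u$ has Fourier support in the annulus $\{|\xi|\sim 2^q\}$, the standard Bernstein-type decay $\|e^{\mu t\Delta}\Delta_q v\|_{L^p}\leq Ce^{-c\mu 2^{2q}t}\|\Delta_q v\|_{L^p}$ holds with absolute constants $c,C>0$, proved by writing the semigroup as convolution with a Gaussian and using the frequency localization.

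I would then take the $L^{k_1}((0,T))$-norm in time of each dyadic block. For the homogeneous piece, the elementary computation $\int_0^T e^{-c\mu 2^{2q}k_1 t}\,dt=(1-e^{-c\mu T 2^{2q}k_1})/(c\mu 2^{2q}k_1)$ yields $\|\Delta_q u_{\mathrm{hom}}\|_{L^{k_1}_T L^p}\leq C\|\Delta_q u_0\|_{L^p}\,2^{-2q/k_1}\bigl((1-e^{-c\mu T 2^{2q}k_1})/(c\mu k_1)\bigr)^{1/k_1}$. For the Duhamel term, I would view the $\tau$-integral as a convolution in time of the decaying kernel with $\|\Delta_q f(\tau)\|_{L^p}$ and invoke Young's convolution inequality with exponents $k,k_2,k_1$ satisfying $1+1/k_1=1/k+1/k_2$; this produces the analogous factor with $k_2$ in place of $k_1$ acting on $\|\Delta_q f\|_{L^k_T L^p}$. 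Multiplying both sides by $2^{q(s+2/k_1)}$, the $2^{-2q/k_1}$ absorbs into $2^{qs}$, while the arithmetic $2^{q(s+2/k_1)}\cdot 2^{-2q/k_2}=2^{q(s-2+2/k)}$ turns the Duhamel weight into exactly $2^{q(s-2+2/k)}$; summing in $q\in\mathbb Z$ gives the first displayed inequality.

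For the second, more condensed estimate, I would invoke the elementary bound $(1-e^{-x})/x\leq\min\{1,1/x\}$, applied at each dyadic scale: this controls the $q$-dependent exponentials by $(c\mu k_1)^{-1/k_1}$ and $(c\mu k_2)^{-1/k_2}$ uniformly in $q$. After taking the $\ell^r$-sum with weight $2^{qs}$ and using $1/k_2=1+1/k_1-1/k$ again, one recovers precisely the factors $\mu^{-1/k_1}$ on the left and $\mu^{1/k-1}$ on the right. Continuity $u\in\mathcal C([0,T];\dot B^s_{p,r})$ when $r<+\infty$ then follows by density, applying the established estimates to smooth approximations and passing to the limit. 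The main technical obstacle is the careful bookkeeping of the three time exponents $k,k_1,k_2$ together with the Chemin--Lerner convention of taking the $L^{k_1}_T L^p$ norm \emph{before} the $\ell^r$-sum in $q$; this is exactly what makes the Young-in-time step nontrivial compared to the classical Bochner version, and it is what allows one to gain the full $2/k_1$ derivatives on the left-hand side.
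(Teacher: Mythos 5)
Your argument is correct and is exactly the standard proof of this classical estimate: frequency localization, Duhamel's formula, the exponential decay $\|e^{\mu t\Delta}\Delta_q v\|_{L^p}\leq Ce^{-c\mu 2^{2q}t}\|\Delta_q v\|_{L^p}$, Young's inequality in time with the exponent relation $1/k_2=1+1/k_1-1/k$, and the uniform bound $(1-e^{-x})/x\leq 1$ for the second display. The paper offers no proof of its own, citing Danchin's lecture notes, and the proof given there proceeds in essentially the same way as yours.
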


We also give a  estimate  for the linear hyperbolic and parabolic
coupled system
\begin{equation}
    \left\{
     \renewcommand{\arraystretch}{1.25}
    \begin{array}{l}
        c_t +  \upsilon \cdot \nabla c+\Lambda d=f, \\
 d_t + \upsilon\cdot \nabla d -\mu \Delta d -\Lambda c=g,
    \end{array}
    \right.\label{e4.1}
    \end{equation}
where $\Lambda=(-\Delta)^{1/2}$.
\begin{prop}\label{p4.1}(see [6])
Let $(c,d)$ be a solution of (\ref{e4.1}) on $[0,T)$ with initial
data $(c_0,d_0) , 1-N/2< s\leq 1+N/2$ and $V(t)=\int_0^t \|v
\|_{B^{\frac{N}{2} +1}} d\tau$. The following estimate holds on
$[0,T)$
\begin{eqnarray*}
&& \|c(t)\|_{\tilde{B}_{\mu}^{s,\infty}}+\|d(t)\|_{B^{s-1}}+\mu
\int_0^t (\|c(\tau)\|_{\tilde{B}_{\mu}^{s,1}}+
        \|d(\tau)\|_{B^{s+1}})d\tau\\
&&\quad \leq C e^{CV(t)}\Big(
\|c_0\|_{\tilde{B}_{\mu}^{s,\infty}}+\|d_0\|_{B^{s-1}}\\
&& \quad \quad+ \int_0^t
e^{-CV(\tau)}
    (\|f(\tau)\|_{\tilde{B}_{\mu}^{s,\infty}}
     +\|g(\tau)\|_{B^{s-1}})d\tau \Big).
\end{eqnarray*}
where $C=C(N,s)$.
\end{prop}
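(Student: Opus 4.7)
\textbf{Proof proposal for Proposition \ref{p4.1}.}

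The plan is to run localized $L^2$ energy estimates on the dyadic blocks $c_q := \Delta_q c$ and $d_q := \Delta_q d$, and then sum with the weights $2^{qs}\max(\mu,2^{-q})^{1-2/r}$ that define the hybrid norm $\tilde{B}_\mu^{s,r}$. First, I would apply $\Delta_q$ to both equations in (\ref{e4.1}), producing
$$
\partial_t c_q + \upsilon\cdot\nabla c_q + \Lambda d_q = \Delta_q f + [\upsilon,\Delta_q]\cdot\nabla c,\qquad
\partial_t d_q + \upsilon\cdot\nabla d_q - \mu\Delta d_q - \Lambda c_q = \Delta_q g + [\upsilon,\Delta_q]\cdot\nabla d.
$$
Taking the $L^2$ inner product of the first with $c_q$ and the second with $d_q$ and adding, the terms $\langle \Lambda d_q, c_q\rangle - \langle\Lambda c_q,d_q\rangle$ cancel by self-adjointness of $\Lambda$, and the transport terms are handled through $\mathrm{div}\,\upsilon=0$ up to a remainder bounded in terms of $\|\nabla\upsilon\|_{L^\infty}$. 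This yields
$$
\tfrac{1}{2}\tfrac{d}{dt}\bigl(\|c_q\|_{L^2}^2 + \|d_q\|_{L^2}^2\bigr) + c\mu 2^{2q}\|d_q\|_{L^2}^2 \leq (\|\Delta_q f\|_{L^2}+\|R^1_q\|_{L^2})\|c_q\|_{L^2} + (\|\Delta_q g\|_{L^2}+\|R^2_q\|_{L^2})\|d_q\|_{L^2},
$$
where $R^i_q$ are the commutator terms, controlled via a standard commutator estimate by $c_q 2^{-qs}\|\nabla\upsilon\|_{\dot{B}^{N/2}_{2,1}}(\|c\|_{B^s}+\|d\|_{B^s})$.

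The above estimate gives parabolic smoothing for $d_q$ but no damping for $c_q$, which is the hyperbolic component. To recover the missing control on $c$, the key step is to introduce a mixed energy adapted to the damped-wave structure of the coupling, namely
$$
\mathcal{L}_q^2 := \|c_q\|_{L^2}^2 + \|d_q\|_{L^2}^2 + 2\kappa_q\langle c_q,\Lambda^{-1}d_q\rangle,\qquad \kappa_q := c\min(\mu 2^{2q},1),
$$
chosen so that $\mathcal{L}_q$ is equivalent to the plain $L^2$ energy. Differentiating $\mathcal{L}_q^2$ in time and substituting the evolution equations, the cross term $\kappa_q\langle\Lambda d_q,\Lambda^{-1}d_q\rangle - \kappa_q\langle c_q,\Lambda^{-1}\Lambda c_q\rangle = \kappa_q(\|d_q\|^2-\|c_q\|^2)$ appears, and absorbing $\kappa_q\|d_q\|^2$ into the parabolic dissipation leaves a genuine damping term $-\kappa_q\|c_q\|_{L^2}^2$. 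Dividing by $\mathcal{L}_q$, I obtain
$$
\tfrac{d}{dt}\mathcal{L}_q + c\bigl(\mu 2^{2q}\|d_q\|_{L^2}+\kappa_q\|c_q\|_{L^2}\bigr) \leq \|\Delta_q f\|_{L^2}+\|\Delta_q g\|_{L^2}+\|R^1_q\|_{L^2}+\|R^2_q\|_{L^2}.
$$
Integrating in time and observing that the weight $\max(\mu,2^{-q})^{-1}$ in the $\tilde{B}^{s,1}_\mu$ norm is precisely proportional to $\kappa_q/2^{2q}\cdot\mu^{-1}$, the two dissipation terms on the left match $\mu\|d\|_{B^{s+1}}$ and $\mu\|c\|_{\tilde{B}^{s,1}_\mu}$ after multiplication by $2^{qs}\max(\mu,2^{-q})^{1-2/r}$ with $r=\infty$ on the initial-data side and $r=1$ on the dissipation side.

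The final step is to multiply by the appropriate weight, take the $\ell^\infty_q$ norm for the $\tilde{B}^{s,\infty}_\mu$-terms (and the $\ell^1_q$ norm for the $\tilde{B}^{s,1}_\mu$-terms), and use the commutator bound together with the hypothesis $s\in(1-N/2,1+N/2]$ to close the sum. A Gronwall argument against $V(t)=\int_0^t\|\upsilon\|_{B^{N/2+1}}d\tau$ produces the $e^{CV(t)}$ factor and completes the proof. The main obstacle is the second paragraph: finding the right weight $\kappa_q$ for the auxiliary functional so that $\mathcal{L}_q$ remains equivalent to $(\|c_q\|^2+\|d_q\|^2)^{1/2}$ uniformly in $q$ while still generating damping on $c_q$ at both the low-frequency regime $\mu 2^q\lesssim 1$ (where the system behaves like a weakly damped wave equation) and the high-frequency regime $\mu 2^q\gtrsim 1$ (where the two modes decouple into a fast and a slow decay). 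The hybrid norm is precisely designed to interpolate between these two regimes, and verifying the equivalence of weights is the computational heart of the argument.
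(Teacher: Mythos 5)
The paper offers no proof of Proposition \ref{p4.1}; it is quoted from Danchin [6], so there is no internal argument to compare against. Your sketch follows, in outline, exactly the route of that reference: dyadic localization, $L^2$ energy identities on $(c_q,d_q)$ with cancellation of the antisymmetric coupling $\langle\Lambda d_q,c_q\rangle-\langle\Lambda c_q,d_q\rangle$, a corrected energy with a cross term to extract damping on the hyperbolic unknown $c$, and summation against the hybrid weights followed by Gronwall. The strategy is the right one.

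Two concrete points in your second paragraph do not survive the actual computation. First, the sign of the cross term: with the conventions of (\ref{e4.1}) one has $\partial_t c_q=-\Lambda d_q+\cdots$ and $\partial_t d_q=\Lambda c_q+\mu\Delta d_q+\cdots$, hence $\frac{d}{dt}\langle c_q,\Lambda^{-1}d_q\rangle=\|c_q\|_{L^2}^2-\|d_q\|_{L^2}^2-\mu\langle c_q,\Lambda d_q\rangle+\cdots$, so adding $+2\kappa_q\langle c_q,\Lambda^{-1}d_q\rangle$ produces $+2\kappa_q\|c_q\|_{L^2}^2$, i.e.\ antidamping; you must take the cross term with a minus sign. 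Second, the cross term generates two contributions you do not account for: (i) $\kappa_q\mu\langle c_q,\Lambda d_q\rangle$, of size $\kappa_q\mu 2^{q}\|c_q\|_{L^2}\|d_q\|_{L^2}$, which can be split into $\tfrac14\kappa_q\|c_q\|_{L^2}^2+\kappa_q\mu^2 2^{2q}\|d_q\|_{L^2}^2$ and absorbed into the parabolic dissipation only if $\kappa_q\mu\lesssim 1$ --- this forces $\kappa_q\sim\min(\mu 2^{2q},\mu^{-1})$ rather than your $\min(\mu 2^{2q},1)$, consistent with the spectral picture (the slow eigenvalue in the regime $\mu 2^{q}\gg 1$ is $\sim-1/\mu$, not $-1$); and (ii) the transport contributions $\langle\upsilon\cdot\nabla c_q,\Lambda^{-1}d_q\rangle+\langle c_q,\Lambda^{-1}(\upsilon\cdot\nabla d_q)\rangle$, which no longer cancel because of the $\Lambda^{-1}$ and require their own commutator estimates. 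Since $\mu$ is a fixed constant in this paper, point (i) only costs $\mu$-dependent constants, but both items must be handled to close the estimate; the cited source does so by treating the regimes $\mu 2^{q}\lesssim 1$ (mixed functional) and $\mu 2^{q}\gtrsim 1$ (diagonalization, with $c$ satisfying a damped transport equation) separately, exactly the splitting you identify at the end as the computational heart of the matter.
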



\section{Well-posedness in critical spaces}\label{bl-S3}

\noindent
For the system (\ref{bl-E3.1}), when the initial density is small in
$\dot{B}_{p,1}^{N/p}$, the local well-posedness can be obtained by
mean of the following form of Schauder-Tychonoff fixed point
argument. Furthermore, when  initial data $(\sigma_0,\upsilon_0,
H_0)$ is small under certain norm, we can obtain the global
existence result.

\begin{thm}\label{th3.1}
({\bf Hukuhara}) Let $K$ be a convex subset of a locally convex
topological linear space $E$, and $\Phi$ be a continuous
self-mapping of $K$. If $\Phi(K)$ is contained in a compact subset
of $K$, then $\Phi$ has a fixed point in $K$.
\end{thm}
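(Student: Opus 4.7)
The plan is to reduce the statement to the classical Schauder--Tychonoff theorem on a compact convex set, and then to establish that theorem by finite-dimensional approximation together with Brouwer's fixed point theorem. First I would let $L$ be a compact subset of $K$ with $\Phi(K)\subset L$, and set $K_0:=\overline{\mathrm{conv}}(L)$. In a (quasi-)complete Hausdorff locally convex space the closed convex hull of a compact set is again compact, and since $K$ is convex with $L\subset K$ one has $K_0\subset K$. Moreover $\Phi(K_0)\subset\Phi(K)\subset L\subset K_0$, so $\Phi$ restricts to a continuous self-map of the compact convex set $K_0$, reducing matters to the Schauder--Tychonoff version of the statement.

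To prove the latter I would approximate $\Phi|_{K_0}$ by maps with finite-dimensional image. Fix any convex balanced neighborhood $U$ of $0$ in $E$; by compactness of $K_0$ there are finitely many points $x_1,\ldots,x_n\in K_0$ with $K_0\subset \bigcup_{i}(x_i+U)$. Let $q_U$ denote the Minkowski gauge of $U$ and set $\psi_i(x):=\max(0,1-q_U(x-x_i))/\sum_j\max(0,1-q_U(x-x_j))$, which defines a continuous partition of unity on $K_0$ subordinate to the cover. Then $p_U(x):=\sum_i\psi_i(x)\,x_i$ maps $K_0$ continuously into the finite-dimensional simplex $S_U:=\mathrm{conv}(x_1,\ldots,x_n)$ and satisfies $p_U(x)-x\in U$ for every $x\in K_0$. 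Since $p_U\circ\Phi$ is a continuous self-map of $S_U$, Brouwer's theorem produces a fixed point $y_U\in S_U$ with $y_U=p_U(\Phi(y_U))$.

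Finally I would pass to the limit along the directed system of neighborhoods $U$ of $0$. The net $\{\Phi(y_U)\}$ lies in the compact set $L$, so has a subnet converging to some $y^{*}\in L\subset K_0$. Since $y_U-\Phi(y_U)=p_U(\Phi(y_U))-\Phi(y_U)\in U$, the corresponding subnet of $\{y_U\}$ converges to the same $y^{*}$, and continuity of $\Phi$ then forces $\Phi(y^{*})=y^{*}$, yielding the desired fixed point.

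The main obstacle is handling the locally convex setting carefully: unlike the Banach case one cannot rely on a fixed norm, and the continuous partition of unity has to be constructed from the Minkowski gauge of each neighborhood $U$, verifying both continuity and strict positivity of the denominator on $K_0$. The compactness of $\overline{\mathrm{conv}}(L)$ is also non-trivial in general locally convex spaces and requires (quasi-)completeness, which must be made explicit. Once these technical points are settled, the finite-dimensional reduction to Brouwer's theorem and the passage to the limit along nets are standard.
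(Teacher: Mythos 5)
The paper does not prove Theorem \ref{th3.1}; it quotes Hukuhara's theorem as a classical tool, so your argument can only be judged on its own terms. On those terms there is a genuine gap in your first step: the claim that $K_0:=\overline{\mathrm{conv}}(L)\subset K$ does not follow from convexity of $K$. Convexity gives $\mathrm{conv}(L)\subset K$, but the \emph{closure} is taken in $E$, and since $K$ is not assumed closed, limit points of $\mathrm{conv}(L)$ may escape $K$. For instance, in $\ell^2$ take $L=\{0\}\cup\{e_n/n\}_{n\geq 1}$ (compact) and $K=\mathrm{conv}(L)$; then $\sum_n 2^{-n}e_n/n$ lies in $\overline{\mathrm{conv}}(L)$ but not in $K$. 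This is not a cosmetic issue: Hukuhara's theorem is precisely the strengthening of Schauder--Tychonoff that dispenses with closedness and compactness of $K$ itself, so reducing to a compact convex $K_0\subset K$ by taking closed convex hulls begs the question. In addition, the compactness of $\overline{\mathrm{conv}}(L)$ needs quasi-completeness of $E$, a hypothesis absent from the statement; you flag this but then would have to add it, which weakens the theorem.

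The good news is that your second and third paragraphs already contain a correct proof if you skip the reduction entirely. Cover the compact set $L$ (not $K_0$) by finitely many sets $x_i+U$ with nodes $x_i\in L$, build the partition of unity and the projection $p_U$ on $L$, and set $S_U:=\mathrm{conv}(x_1,\dots,x_n)$. Then $S_U\subset\mathrm{conv}(L)\subset K$ by convexity of $K$ alone (a finite convex hull of compact sets is compact, so no completeness is needed), $\Phi(S_U)\subset L$, and $p_U\circ\Phi$ is a continuous self-map of the finite-dimensional compact convex set $S_U$, so Brouwer applies. Your limiting argument along the net of neighborhoods $U$, using that $\Phi(y_U)$ lives in the compact set $L$ and that $y_U-\Phi(y_U)\in U$, then goes through verbatim and produces the fixed point in $L\subset K$. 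So the approximation-plus-Brouwer strategy is the right one; only the preliminary passage to $\overline{\mathrm{conv}}(L)$ must be deleted.
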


Let  us briefly enumerate the main steps of the proof: In the first
step, we show  the local existence problem amounts to find a fixed
point for some map $\Phi$.  In the next two steps, we state various
$\textit{a priori}$ estimates for $\Phi$. In the fourth step, we
show that Hukuhara's theorem indeed applies. Step five is devoted to
the uniqueness. At last, when the initial data is small under
certain norm, we give the proof of global existence.

Step 1. Construction of the functional $\Phi$\\
 Define $\Phi$  by
$(\sigma,\upsilon,H)=\Phi(a,u,\xi)$, where $(\sigma,\upsilon,H)$ is
the solution of the linear problem
\begin{equation}
    \left\{
     \renewcommand{\arraystretch}{1.25}
    \begin{array}{l}
        \sigma_t +  u \cdot \nabla \sigma=0, \\
 \upsilon_t -\mu \Delta \upsilon=G- (a+1)\nabla P,\\
 H_t +u\cdot \nabla H=\nabla u (\xi+I),\\
 $div$ \upsilon=0, \\
 (\sigma,\upsilon, H)|_{t=0}=(\sigma_0,\upsilon_0,H_0),
    \end{array}
    \right.\label{bl-E3.2}
    \end{equation}
with $\sigma=\sigma(a)$ and
$$
G:= -u \cdot\nabla u  +\mu a \Delta u +\text{div} \xi +\xi^T \cdot
\nabla\xi .
$$

Step 2. $\textit{A priori}$ estimates

We shall prove that for suitably small $T$ and  $\|\sigma_0
\|_{\dot{B}_{p,1}^{{N}/{p}}}$, the functional $\Phi$ has a fixed
point in the Banach space
$$
\tilde{E}_T^p :=\tilde{L}_T^\infty (\dot{B}_{p,1}^{\frac{N}{p}}) \times
\big( \tilde{L}_T^\infty (\dot{B}_{p,1}^{\frac{N}{p}-1})\cap \tilde{L}_T^1 (\dot{B}_{p,1}^{\frac{N}{p}+1}) \big)^{N}
 \times \tilde{L}_T^\infty (\dot{B}_{p,1}^{\frac{N}{p}})^{N^2}.
$$
Let $E_0 := \|\upsilon_0 \|_{\dot{B}_{p,1}^{\frac{N}{p}-1}}+\|H_0 \|_{\dot{B}_{p,1}^{\frac{N}{p}}}$ and
 $\|\sigma_0 \|_{\dot{B}_{p,1}^{\frac{N}{p}}}=R_0$.  $C_0>0$ and $(R, \eta)\in(0,1)^2 $ to be fixed hereafter,
 we denote
\begin{eqnarray*}
{\mathcal {A}}=\Big\{ (\sigma,\upsilon,H) \in \tilde{E}_T^p :  &&
\|\sigma \|_{\tilde{L}_T^\infty(\dot{B}_{p,1}^{\frac{N}{p}})}\leq
R,\quad
   \|\upsilon \|_{\tilde{L}_T^1 (\dot{B}_{p,1}^{\frac{N}{p}+1})}
   +\|\upsilon \|_{\tilde{L}_T^2 (\dot{B}_{p,1}^{\frac{N}{p}})}\leq \eta,\\
&&\|\upsilon \|_{\tilde{L}_T^\infty(\dot{B}_{p,1}^{\frac{N}{p}-1})}
   +\|H \|_{\tilde{L}_T^\infty(\dot{B}_{p,1}^{\frac{N}{p}})}\leq C_0 E_0. \Big\}
\end{eqnarray*}
 We claim that if $T,R, R_0$ and $\eta$ are small
enough, $\Phi$ maps $\mathcal {A}$ to $\mathcal {A}$.

In what follows, we assume $(a,u,\xi)\in {\mathcal {A}}$ and denote
$s:=\frac{N}{p},\ U(t):=\int_0^t \|\nabla u(\tau)
\|_{\dot{B}_{p,1}^{\frac{N}{p}}}d \tau$ for convenience. Using
Proposition \ref{p2.1}, we have
\begin{equation}\label{sig}
\|\sigma \|_{\tilde{L}_T^\infty(\dot{B}_{p,1}^{s})}\leq e^{CU(T)}\|
\sigma_0 \|_{\dot{B}_{p,1}^{s}}\leq e^{C\eta}R_0,
\end{equation}
and
\begin{eqnarray}\label{bl-E3.3}
\renewcommand{\arraystretch}{1.25}
\|H \|_{\tilde{L}_T^\infty(\dot{B}_{p,1}^{s})} &\leq& e^{CU(T)}\big(
\| H_0 \|_{\dot{B}_{p,1}^{s}}
  +\|\nabla u (I+\xi)\|_{\tilde{L}_T^1(\dot{B}_{p,1}^{s})} \big)\nonumber\\
&\leq& e^{C\eta}\big( \| H_0 \|_{\dot{B}_{p,1}^{s}}+\|\nabla u
\|_{\tilde{L}_T^1(\dot{B}_{p,1}^{s})}
  +C \| \xi \|_{\tilde{L}_T^\infty (\dot{B}_{p,1}^{s})}
  \|\nabla u \|_{\tilde{L}_T^1(\dot{B}_{p,1}^{s})}\big)\nonumber\\
&\leq& e^{C\eta}\big( \| H_0 \|_{\dot{B}_{p,1}^{s}}+\eta +C\eta C_0 E_0 \big).
\end{eqnarray}
Taking  advantage of  Proposition \ref{p2.2} and Lemma \ref{l-2.2},
we obtain
\begin{eqnarray}\label{bl-E3.4}
\renewcommand{\arraystretch}{1.25}
\|\upsilon \|_{\tilde{L}_T^\infty(\dot{B}_{p,1}^{s-1})} &\leq& C
\big( \| \upsilon_0
\|_{\dot{B}_{p,1}^{s-1}}+\|G\|_{\tilde{L}_T^1(\dot{B}_{p,1}^{s-1})}\nonumber\\
&&+\|\nabla P\|_{\tilde{L}_T^1(\dot{B}_{p,1}^{s-1})}
 +\|a\nabla P\|_{\tilde{L}_T^1(\dot{B}_{p,1}^{s-1})}\big)\nonumber\\
&\leq& C \big( \| \upsilon_0 \|_{\dot{B}_{p,1}^{s-1}}
+\|G\|_{\tilde{L}_T^1(\dot{B}_{p,1}^{s-1})}\nonumber\\
&& +(1+\|a\|_{\tilde{L}_T^\infty(\dot{B}_{p,1}^{s})})
\|\nabla P\|_{\tilde{L}_T^1(\dot{B}_{p,1}^{s-1})}\big).
\end{eqnarray}
 Applying div on both sides of (\ref{bl-E3.2})$_2$ and by the incompressible condition div $\upsilon=0$, we have
$$
\text{div} \big( (a+1)\nabla P \big) = \text{div} G.
$$
Since $\dot{B}_{p,1}^{N/p}\hookrightarrow
\dot{B}_{p,\infty}^{N/p}\cap L^\infty$, therefore  $1-R\leq
1-\|a\|_{L^\infty (\dot{B}_{p,1}^s)}\leq 1+a\leq 1+R$.  Choosing
$R<\frac{1}{2}$, then using Lemma \ref{12.3} to the above equation,
we get
\begin{eqnarray}\label{bl-E3.5}
\renewcommand{\arraystretch}{1.25}
\|\nabla P\|_{\tilde{L}_T^1(\dot{B}_{p,1}^{s-1})} &\leq& C \big(
\|G\|_{\tilde{L}_T^1(\dot{B}_{p,1}^{s-1})} +
  \|a\|_{\tilde{L}_T^\infty (\dot{B}_{p,1}^{s})}\|\nabla P\|_{\tilde{L}_T^1(\dot{B}_{p,1}^{s-1})}\big)\nonumber\\
&\leq& C\|G\|_{\tilde{L}_T^1(\dot{B}_{p,1}^{s-1})} +CR \|\nabla P\|_{\tilde{L}_T^1(\dot{B}_{p,1}^{s-1})},
\end{eqnarray}
here we have used Lemma \ref{l-2.2}. Choosing $R<1/2$ so small that
$CR<1$, (\ref{bl-E3.5}) becomes
\begin{equation}\label{p}
\|\nabla P\|_{\tilde{L}_T^1(\dot{B}_{p,1}^{s-1})} \leq C
\|G\|_{\tilde{L}_T^1(\dot{B}_{p,1}^{s-1})}.
\end{equation}
Thus, combining (\ref{bl-E3.4}) and (\ref{p}) yield that
\begin{equation}\label{v}
\|\upsilon \|_{\tilde{L}_T^\infty(\dot{B}_{p,1}^{s-1})} \leq C \big[
\|\upsilon_0\|_{\dot{B}_{p,1}^{s-1}}+ (1+\|a\|_{\tilde{L}_T^\infty
(\dot{B}_{p,1}^{s})})\|G\|_{\tilde{L}_T^1(\dot{B}_{p,1}^{s-1})}
\big].
\end{equation}
By the definition of $G$ and Lemma \ref{l-2.2} we infer that
\begin{eqnarray}\label{g}
\renewcommand{\arraystretch}{1.25}
\|G\|_{\tilde{L}_T^1(\dot{B}_{p,1}^{s-1})}
 &\leq &C\|u\|^2_{\tilde{L}_T^2(\dot{B}_{p,1}^{s})} + C\|a\|_{\tilde{L}_T^\infty
          (\dot{B}_{p,1}^{s})} \|u\|_{\tilde{L}_T^1(\dot{B}_{p,1}^{s+1})}\nonumber\\
&& \quad \quad + CT\|\xi\|_{\tilde{L}_T^\infty
(\dot{B}_{p,1}^{s})}\big( 1+\|\xi\|_{\tilde{L}_T^\infty
(\dot{B}_{p,1}^{s})} \big).
\end{eqnarray}
Then  (\ref{p}) becomes
$$
\|\nabla P\|_{\tilde{L}_T^1(\dot{B}_{p,1}^{s-1})} \leq C \big[ R\eta
+\eta^2 + TC_0 E_0 (1+C_0 E_0)\big].
$$
Combining (\ref{bl-E3.3}) ,  (\ref{v}) and  (\ref{g}), we have
\begin{eqnarray*}
\renewcommand{\arraystretch}{1.25}
&&\|\upsilon\|_{\tilde{L}_T^\infty(\dot{B}_{p,1}^{s-1})}+
        \|H\|_{\tilde{L}_T^\infty(\dot{B}_{p,1}^{s})}\nonumber\\
&&\leq C e^{C\eta}\big[ E_0+ \eta(1+C_0 E_0)\big]
 + C(1+R)\big[ R\eta +\eta^2 + TC_0 E_0 (1+C_0 E_0)\big].
\end{eqnarray*}
By Proposition \ref{p2.2}, we can obtain
\begin{eqnarray*}
\renewcommand{\arraystretch}{1.25}
&&\|\upsilon\|_{\tilde{L}_T^1(\dot{B}_{p,1}^{s+1})}+
        \|\upsilon\|_{\tilde{L}_T^2 (\dot{B}_{p,1}^{s})}\\
&&\leq C \sum_{q\in \mathbb{Z}}2^{q(s-1)}\|\Delta_q u_0\|_{L^p}(1-e^{-c2^{2q}T})
     + C\| G -(a+1)\nabla P \|_{\tilde{L}_T^1(\dot{B}_{p,1}^{s-1})}\\
 &&\leq  C \sum_{q\in \mathbb{Z}}2^{q(s-1)}\|\Delta_q u_0\|_{L^p}(1-e^{-c2^{2q}T})\\
&&\quad + C(1+R)\big[ R\eta +\eta^2 + TC_0 E_0 (1+C_0 E_0)\big],
\end{eqnarray*}
here we have used (\ref{p}) and (\ref{g}).

Taking $C_0 =6C, \ R_0 \leq \frac{2}{3}R$, $\eta$  small such that
$e^{c\eta}\leq \frac{3}{2}$, $\eta (1+C_0 E_0)\leq E_0$ and choosing
$R<{1}/{2}$ small enough such that $(1+R)(R\eta +\eta^2)\leq
\min\{E_0,\frac{\eta}{4C}\}$. Next, we choose $T$ small such that
$CT(1+R) (1+C_0 E_0)\leq  \min \{\frac{1}{3} , \frac{\eta }{2C_0
E_0}\}$ and $C \sum_{q\in \mathbb{Z}}2^{q(s-1)}\|\Delta_q
u_0\|_{L^p}(1-e^{-c2^{2q}T})\leq \frac{\eta}{4}$. Then
\begin{eqnarray*}
&& \|\sigma \|_{\tilde{L}_T^\infty(\dot{B}_{p,1}^{\frac{N}{p}})}\leq
R,\quad \|\upsilon \|_{\tilde{L}_T^1(\dot{B}_{p,1}^{\frac{N}{p}+1})}
   +\|\upsilon \|_{\tilde{L}_T^2 (\dot{B}_{p,1}^{\frac{N}{p}})}\leq \eta,\\
&&\|\upsilon \|_{\tilde{L}_T^\infty(\dot{B}_{p,1}^{\frac{N}{p}-1})}
   +\|H \|_{\tilde{L}_T^\infty(\dot{B}_{p,1}^{\frac{N}{p}})}\leq C_0 E_0,
   \quad \|\nabla P\|_{\tilde{L}_T^1(\dot{B}_{p,1}^{\frac{N}{p}-1})}<\infty.
\end{eqnarray*}
Therefore, the functional $\Phi$ maps $\mathcal {A}$ to $\mathcal
{A}$.

Step 3. Time derivatives

The compactness of $\Phi$ will be supplied by the following lemma.

\begin{lem}\label{l3.1}
Denote $(\sigma,\upsilon, H):=\Phi(a,u,\xi)$, let $(a,u,\xi)$ be in
$\mathcal {A}$ with $T, \eta, C_0, R_0$ and $ R$ chosen according
to Step 2. Then $\sigma_t, H_t\in L_T^2 (\dot{B}_{p,1}^{s-1})$ and
$\upsilon_t\in L_T^{\frac{2}{1+\alpha}}
(\dot{B}_{p,1}^{s-1}+\dot{B}_{p,1}^{s-2+\alpha})$ for any $\alpha\in
[-1,1]$ such that $\alpha >\max(2-2N/p,2-N)$. Moreover, there exists
a constant $\overline{C}$ depending on $T,\eta,C_0,R_0, R$ and $E_0$
such that
$$
\|\sigma_t\|_{L_T^2 (\dot{B}_{p,1}^{s-1})} + \|H_t\|_{L_T^2
(\dot{B}_{p,1}^{s-1})}+ \|\upsilon_t\|_{L_T^{\frac{2}{1+\alpha}}
(\dot{B}_{p,1}^{s-1}+\dot{B}_{p,1}^{s-2+\alpha})}\leq \overline{C}.
$$
\end{lem}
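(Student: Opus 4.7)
The plan is to substitute back into~(\ref{bl-E3.2}) to read off expressions for $\sigma_t,H_t,\upsilon_t$, and then estimate each resulting expression through the product rules of Lemma~\ref{l-2.2} together with time interpolation between the two endpoints that define~$\mathcal{A}$. For the transport unknowns, $\sigma_t=-u\cdot\nabla\sigma$ and $H_t=-u\cdot\nabla H+\nabla u\,(I+\xi)$. With $u\in\tilde{L}_T^2(\dot{B}_{p,1}^{s})$ and $\sigma,H,\xi\in\tilde{L}_T^\infty(\dot{B}_{p,1}^{s})$, I would invoke~(\ref{e2.3}) with $(s_1,s_2)=(s,s-1)$ (which is admissible since $s=N/p$ and $s_1+s_2=2N/p-1>N\max(0,2/p-1)$ for $p$ in the stated range and $N\geq 2$), together with H\"older in time, to place both $\sigma_t$ and $H_t$ in $L_T^2(\dot{B}_{p,1}^{s-1})$ with the required quantitative bound.

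For the velocity, I would write $\upsilon_t=\mu\Delta\upsilon+G-(a+1)\nabla P$ and prove the sum-space bound by splitting each contribution into a high-regularity piece valued in $\dot{B}_{p,1}^{s-1}$ and a low-regularity piece valued in $\dot{B}_{p,1}^{s-2+\alpha}$. Time interpolation between $\tilde{L}_T^1(\dot{B}_{p,1}^{s+1})$ and $\tilde{L}_T^\infty(\dot{B}_{p,1}^{s-1})$ yields $\upsilon\in\tilde{L}_T^{2/(1+\alpha)}(\dot{B}_{p,1}^{s+\alpha})$ for every $\alpha\in[-1,1]$, whence $\mu\Delta\upsilon\in L_T^{2/(1+\alpha)}(\dot{B}_{p,1}^{s-2+\alpha})$. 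The purely $\xi$-dependent terms $\mathrm{div}\,\xi$ and $\xi^T\cdot\nabla\xi$ sit in $\tilde{L}_T^\infty(\dot{B}_{p,1}^{s-1})$ and embed on the finite interval $[0,T]$ into $L_T^{2/(1+\alpha)}(\dot{B}_{p,1}^{s-1})$. The remaining quadratic pieces $u\cdot\nabla u$ and $\mu a\Delta u$ are handled by Lemma~\ref{l-2.2}(i) after splitting time exponents as $1/q_1+1/q_2=(1+\alpha)/2$: each factor is interpolated into $\tilde{L}_T^{q_i}(\dot{B}_{p,1}^{s-1+2/q_i})$, and the product falls into $L_T^{2/(1+\alpha)}(\dot{B}_{p,1}^{s-2+\alpha})$ provided the admissibility condition $s_1+s_2>N\max(0,2/p-1)$ of Lemma~\ref{l-2.2} holds, which is precisely the stated constraint $\alpha>\max(2-2N/p,\,2-N)$.

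For the pressure, I would decompose $G=G_{\mathrm{lo}}+G_{\mathrm{hi}}$ with $G_{\mathrm{lo}}:=-u\cdot\nabla u+\mu a\Delta u$ and $G_{\mathrm{hi}}:=\mathrm{div}\,\xi+\xi^T\cdot\nabla\xi$, and correspondingly split $\nabla P=\nabla P_{\mathrm{lo}}+\nabla P_{\mathrm{hi}}$ via the elliptic problems $-\mathrm{div}((a+1)\nabla P_*)=\mathrm{div}\,G_*$. Since $\|a\|_{\tilde{L}_T^\infty(\dot{B}_{p,1}^{s})}\leq R<1/2$, Lemma~\ref{12.3} (and its obvious analogue at the regularity index $s-2+\alpha$) transfers the space-time integrability of $G_*$ to $\nabla P_*$, giving $\nabla P_{\mathrm{hi}}\in\tilde{L}_T^\infty(\dot{B}_{p,1}^{s-1})$ and $\nabla P_{\mathrm{lo}}\in L_T^{2/(1+\alpha)}(\dot{B}_{p,1}^{s-2+\alpha})$. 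The full contribution $(1+a)\nabla P$ is then estimated piecewise by Lemma~\ref{l-2.2}(i) exactly as for $\mu a\Delta u$, under the same $\alpha$-constraint.

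The main obstacle is exactly this mechanism. A naive bound would place $u\cdot\nabla u$ only in $L_T^1(\dot{B}_{p,1}^{s-1})$, and $L_T^1$ does not embed into $L_T^{2/(1+\alpha)}$ once $\alpha<1$, so time integrability must be purchased by sacrificing spatial regularity via the interpolation of $u$. This is what forces the product rule of Lemma~\ref{l-2.2}(i) to be applied at the reduced index $s-2+\alpha$, and the resulting admissibility condition is precisely $\alpha>\max(2-2N/p,\,2-N)$. Once every piece sits in the advertised space, summing the individual estimates closes the argument and produces the constant $\overline{C}$ with the dependencies announced.
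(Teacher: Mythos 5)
Your argument is correct and coincides with the intended proof: the paper omits the argument for Lemma~\ref{l3.1}, deferring to [3,12], and what you describe --- reading $\sigma_t$, $H_t$, $\upsilon_t$ off the linear system (\ref{bl-E3.2}), applying the product laws of Lemma~\ref{l-2.2} after time-interpolating $u$ and $\upsilon$ between the two endpoint spaces defining $\mathcal{A}$, and recovering $\nabla P$ from $\mathrm{div}((1+a)\nabla P)=\mathrm{div}\,G$ --- is exactly the computation carried out in those references, with the constraint $\alpha>\max(2-2N/p,2-N)$ arising, as you say, from the admissibility condition $s_1+s_2>N\max(0,\frac{2}{p}-1)$ in the product laws. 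The one step you state without justification, the ``obvious analogue'' of Lemma~\ref{12.3} at the regularity index $s-2+\alpha$, does hold, but only because the commutator estimate (\ref{j}) of Lemma~\ref{a.3} is applicable at that index and its hypotheses reduce to the same constraint on $\alpha$; that deserves a line of verification rather than the word ``obvious''.
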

The similar proof can be found in [3,12], we omit here.

Step 4.  The fixed point argument\\
We first introduce a  functional space
$$
Y_T^p :=\tilde{L}_T^\infty (\dot{B}_{p,1}^{s}) \times \big(
\tilde{L}_T^\infty (\dot{B}_{p,1}^{s-1})\cap \tilde{L}_T^2
(\dot{B}_{p,1}^{s}) \cap {\mathcal
{M}}_T(\dot{B}_{p,1}^{s+1})\big)^{N}
 \times \tilde{L}_T^\infty (\dot{B}_{p,1}^{s})^{N^2},
$$
where ${\mathcal {M}}_T(\dot{B}_{p,1}^{s+1})$ stands for the space
of bounded measures on $[0,T]$ with values in $\dot{B}_{p,1}^{s+1}$.
The space $Y_T^p$ endowed with the norm
$$
\|(a,u,\xi)\|_{Y_T^p} := \|(a,\xi)\|_{\tilde{L}_T^\infty
(\dot{B}_{p,1}^{s})}+\|u\|_{\tilde{L}_T^\infty
(\dot{B}_{p,1}^{s-1})\cap \tilde{L}_T^2
(\dot{B}_{p,1}^{s})}+\int_0^T d \|u\|_{\tilde{L}_T^\infty
(\dot{B}_{p,1}^{s+1})}
$$
is a Banach space. Furthermore, $Y_T^p$ is the dual space of
$$
X_T^p :=\tilde{L}_T^1 (\dot{B}_{p',\infty}^{-s}) \times \big(
\tilde{L}_T^1 (\dot{B}_{p',\infty}^{1-s})+ \tilde{L}_T^2
(\dot{B}_{p',\infty}^{-s}) + {\mathcal
{C}}([0,T];\dot{B}_{p',\infty}^{-s-1})\big)^{N}
 \times \tilde{L}_T^1 (\dot{B}_{p',\infty}^{-s})^{N^2},
$$
where $\tilde{L}_T^1 (B_{q,\infty}^{k})$ stands for the completion
of ${\mathscr{S}}([0,T];{\mathbb{R}}^N)$ under the norm of
$\tilde{L}_T^1 (B_{q,\infty}^{k})$, and that $X_T^p$ is a separable
Banach space.

Let $\overline{C}$ be as in Lemma \ref{l3.1}. We denote
$$
{\mathcal {D}}:=\big{\{}  (a,u,\xi)\in \mathcal {A};
 \|(a_t,\xi_t)\|_{L_T^2 (\dot{B}_{p,1}^{s-1})}+ \|u_t\|_{L_T^{\frac{4}{3}}
(\dot{B}_{p,1}^{s-1}+\dot{B}_{p,1}^{s-3/2})}\leq \overline{C}
\big{\}}.
$$
Since $Y_T^p$ is the dual  space of a Banach space, we gather that
$Y_T^p$ endowed with the weak star topology is a convex topological
linear space. Obviously, $\mathcal {A}$ is a convex subset of
$Y_T^p$. From Lemma \ref{l3.1}, we know that $\Phi(\mathcal
{A})\subset \mathcal {D}$. Since $\mathcal {D} \in \mathcal {A}$, it
is clear that  $\Phi$ is a self-mapping of $\mathcal {D}$. Just like
the proof in [3,12], we can obtain the continuity and
compactness of $\Phi$. Then Hukuhara's theorem ensures that the map
$\Phi$ has a fixed point $(\sigma, \upsilon, H)\in \mathcal {A}$,
which is a solution of (\ref{bl-E3.1}).

Furthermore, we can check that the right hand sides of
(\ref{bl-E3.1})$_1$ , (\ref{bl-E3.1})$_2$ and (\ref{bl-E3.1})$_3$
belongs to $L_T^1 (\dot{B}_{p,1}^{s})$, then Proposition \ref{p2.1}
and Proposition \ref{p2.2} insure $\sigma, H , \upsilon \in
{\mathcal{C}}([0,T];\dot{B}_{p,1}^{s})$.

Step 5. The proof of uniqueness

We assume that $(\sigma^i, \upsilon^i, H^i,\nabla P^i)\in
\tilde{E}_T^p \times \tilde{L}_T^1 (\dot{B}_{p,1}^{\frac{N}{p}-1})^N
\ (i=1,2;  p\in[1,N])$ are two solutions  of (\ref{bl-E3.1}) with
the same initial data. Set
$$
(\delta \sigma,\delta\upsilon,\delta H,\nabla \delta P)=(\sigma^1
-\sigma^2,\upsilon^1 -\upsilon^2,H^1 -H^2,\nabla P^1 -\nabla P^2).
$$
Then $(\delta \sigma,\delta\upsilon,\delta H,\nabla \delta P)$
satisfies
\begin{equation}
    \left\{
     \renewcommand{\arraystretch}{1.25}
    \begin{array}{l}
        \delta\sigma_t +  \upsilon^2 \cdot \nabla \delta\sigma=\delta G_1, \\
 \delta\upsilon_t  -\mu \Delta \delta \upsilon=\delta G_2,\\
 \delta H_t +\upsilon^2 \cdot \nabla \delta H=\delta G_3,
    \end{array}
    \right.\label{bl-E3.6}
    \end{equation}
 where
 \begin{eqnarray*}
\delta G_1 &:=& -\delta \upsilon \nabla \sigma^1,\\
\delta G_2 &:=& \mu\sigma^1 \Delta\upsilon^1-\upsilon^1 \cdot \nabla
\upsilon^1  -(\sigma^1+1)\nabla P^1+\text{div}H^1 + \text{div}(H^1 H^{1T})\\
&&\quad  -\mu\sigma^2 \Delta\upsilon^2 +\upsilon^2 \cdot\nabla
\upsilon^2  -\text{div}H^2 - \text{div}(H^2 H^{2T})+(\sigma^2+1)\nabla P^2,\\
\delta G_3 &:=& \nabla \delta\upsilon +\nabla \upsilon^1 \cdot H^1
-\nabla \upsilon^2 \cdot H^2 -\delta\upsilon \cdot \nabla H^1.
 \end{eqnarray*}
In what follows, we set $V^i (t):=\int_0^t \|\upsilon^i (\tau)
\|_{\dot{B}_{p,1}^{s+1}}d \tau$ for $i=1,2$, and denote by $A_T$ a
constant depending on
$\|(\sigma^1,\sigma^2)\|_{\tilde{L}_T^{\infty}(\dot{B}_{p,1}^{s})}$
and $\|(H^1,H^2)\|_{\tilde{L}_T^{\infty}(\dot{B}_{p,1}^{s})}$. Since
for any  $p\in [1,N],\  \tilde{E}_T^p \subseteq \tilde{E}_T^{N}$. So
we take $p=N$ in the sequel. Applying the Proposition \ref{p2.1} and
Lemma \ref{l-2.1}, we get that for any $t\in [0,T]$,
\begin{eqnarray*}
\|\delta \sigma(t) \|_{\dot{B}_{N,\infty}^0} &\leq&
 e^{CV^2 (T)}\int_0^t \|\delta\upsilon\|_{\dot{B}_{N,1}^{1}}\|\sigma^1\|_{\dot{B}_{N,\infty}^{1}}d\tau,\\
  \|\delta H(t)\|_{\dot{B}_{N,\infty}^0}
&\leq& e^{CV^2 (T)}\int_0^t \Big(
\|\upsilon\|_{\dot{B}_{N,1}^{1}}(1+\|H^1\|_{\dot{B}_{N,\infty}^{1}})\\
&&\quad +\|\upsilon^2\|_{\dot{B}_{N,1}^{2}}\|\delta
 H\|_{\dot{B}_{N,\infty}^{0}}\Big) d\tau.
\end{eqnarray*}
By the above estimates and Gronwall's inequality, we obtain
\begin{equation}\label{guji}
\|\delta \sigma \|_{\dot{B}_{N,\infty}^{0}}+\|\delta H
\|_{\dot{B}_{N,\infty}^{0}}\leq e^{CV^2 (T)}\int_0^t
\|\delta\upsilon \|_{\dot{B}_{N,1}^{1}}(1+\|\sigma^1
\|_{\dot{B}_{N,1}^{1}}+\| H^1 \|_{\dot{B}_{N,1}^{1}})d\tau.
\end{equation}
Using Proposition \ref{p2.2}, we know
\begin{equation}\label{v1}
\|\delta\upsilon
\|_{\tilde{L}_t^1(\dot{B}_{N,\infty}^{1})}+\|\delta\upsilon
\|_{\tilde{L}_t^2 (\dot{B}_{N,\infty}^{0})} \leq \|\delta G_2
\|_{\tilde{L}_t^1(\dot{B}_{N,\infty}^{-1})}.
\end{equation}
Since $ \text{div}\  (\delta G_2)=\text{div}\ (\delta \upsilon_t
-\mu\Delta\delta \upsilon)=0, $ thus
$$
\text{div}\ ((\sigma^1 +1)\nabla \delta P)=\text{div}\ F,
$$
where
\begin{eqnarray*}
0&<& c_1\leq \sigma^1 +1 \leq c_2,\\
 F&=& \mu \delta \sigma \Delta
\upsilon^1 +\mu\sigma^2 \Delta \delta\upsilon -\upsilon^1
\nabla\delta\upsilon -\delta\upsilon\nabla \upsilon^2
-\delta\sigma\nabla P^2\\
&&\quad + \text{div}\ \delta H +\text{div}\ (H^1
H^{1T}-H^2 H^{2T}).
\end{eqnarray*}
By using the estimate (\ref{les}) to the above equation, we have
\begin{equation}\label{dp}
\|\nabla \delta P\|_{\dot{B}_{N,\infty}^{-1}}\leq C \|
F\|_{\dot{B}_{N,\infty}^{-1}}+C\| \sigma^1\|_{\dot{B}_{N,1}^{1}}\|
\nabla \delta P\|_{\dot{B}_{N,\infty}^{-1}}.
\end{equation}
Choosing $C\| \sigma^1\|_{\dot{B}_{N,1}^{1}}\leq \frac{1}{2}$, by
the definition of $F$, we obtain
\begin{eqnarray*}
\|\delta G_2 \|_{\dot{B}_{N,\infty}^{-1}}
 &\leq& C \|\delta\upsilon \|_{\dot{B}_{N,\infty}^{0}}(\|\upsilon^1\|_{\dot{B}_{N,1}^{1}}
      +\|\upsilon^2\|_{\dot{B}_{N,1}^{1}})
    +C\|\sigma^2\|_{\dot{B}_{N,1}^{1}}\|\delta\upsilon\|_{\dot{B}_{N,\infty}^{1}}\\
    &&\quad + \|\delta H\|_{\dot{B}_{N,\infty}^{0}}
 +C\|\delta\sigma\|_{\dot{B}_{N,\infty}^{1}} (\|\upsilon^1\|_{\dot{B}_{N,1}^{1}}
 +\|\nabla P^2\|_{\dot{B}_{N,1}^{-1}})\\
 &&\quad +\|\text{div}(H^1
   H^{1T}-H^2 H^{2T})\|_{\dot{B}_{N,\infty}^{-1}}
   \end{eqnarray*}
We can take $\overline{T}\leq T$ small enough such that for any
$t\in [0,\overline{T}]$,
$$
\|\nabla P\|_{\tilde{L}_t^1(\dot{B}_{N,1}^{-1})}+
\|(\upsilon^1,\upsilon^2)\|_{\tilde{L}_t^1(\dot{B}_{N,1}^{2})\cap
\tilde{L}_t^2(\dot{B}_{N,1}^{1})}+
\|(\sigma^1,\sigma^2)\|_{\tilde{L}_t^\infty(\dot{B}_{N,1}^{1})}\ll
1.
$$
Therefore, (\ref{v1}) becomes
\begin{eqnarray}\label{v2}
\|\delta\upsilon \|_{\tilde{L}_t^1(\dot{B}_{N,\infty}^{1})} &\leq& A_T
\int_0^t \Big(\|\delta \sigma \|_{\dot{B}_{N,\infty}^{0}}(1+\|
\upsilon^1\|_{\dot{B}_{N,1}^{1}} +\| \nabla
P^2\|_{\dot{B}_{N,1}^{0}})\nonumber\\
&&\quad+\| \delta
H\|_{\dot{B}_{N,\infty}^{0}}\Big) d\tau.
\end{eqnarray}
Using (\ref{e2.5}), we obtain
$$
\|\delta\upsilon \|_{\tilde{L}_t^1(\dot{B}_{N,1}^{1})} \leq C
\|\delta\upsilon \|_{\tilde{L}_t^1(\dot{B}_{N,\infty}^{1})} \log
 \big( e+ C_T \|\delta\upsilon \|^{-1}_{\tilde{L}_t^1 (\dot{B}_{N,\infty}^{1})}\big),
$$
where $C_T =\|\delta\upsilon
\|_{\tilde{L}_t^1(\dot{B}_{N,\infty}^{0})}
 +\|\delta\upsilon \|_{\tilde{L}_t^1 (\dot{B}_{N,\infty}^{2})}$.
It yields that for any $t\in [0,\overline{T}]$,
\begin{eqnarray}\label{v2}
\|\delta\upsilon \|_{\tilde{L}_t^1(\dot{B}_{N,\infty}^{1})}
& \leq& A_T
\int_0^t \Big(\|\delta \upsilon
\|_{\tilde{L}_{\tau}^1(\dot{B}_{N,\infty}^{1})}
 (1+\|\upsilon^1\|_{\dot{B}_{N,1}^{1}} +\| \nabla
P^2\|_{\dot{B}_{N,1}^{-1}})\nonumber\\
&&\log(e+ C_T \|\delta\upsilon
\|^{-1}_{\tilde{L}_\tau^1 (\dot{B}_{N,\infty}^{1})})\Big) d\tau.
\end{eqnarray}
Since $\int_0^t \| \nabla P^2\|_{\dot{B}_{N,1}^{-1}} \ll 1$ for any
$t\in [0,\overline{T}]$,  $1+\|
\upsilon^1\|_{\dot{B}_{2N,1}^{{3}/{2}}}$ is integrable on $[0,T]$,
and
$$
\int_0^1 \frac{1}{r \log (e+ C_T r^{-1})}dr=+\infty,
$$
from Osgood lemma we know $(\delta\sigma,\delta\upsilon,\delta
H,\nabla\delta P)=(0,0,0,0)$ on $[0,\overline{T}]$. Using Continuity
argument, we can prove $(\sigma^1,\upsilon^1,H^1,\nabla
P^1)=(\sigma^2,\upsilon^2,H^2,\nabla P^2)$ on $[0,T]$.

Step 6. Global existence with small initial data

Note that $\|\cdot\|_{\tilde{B}_{\mu}^{s,\infty}}\approx
\|\cdot\|_{B^{s-1}\cap B^s}$  and
$\|\cdot\|_{\tilde{B}_{\mu}^{s,2}}=\|\cdot\|_{B^{s}}$. The above
steps ensure that there exists an positive time $T_1$ and a unique
solution $(\sigma,\upsilon,H)\in {\mathcal{H}}_T^{\frac{N}{2}},\
T\leq T_1$ such that
\begin{equation}\label{h}
\|(\sigma,H,\upsilon)\|_{{\mathcal {H}}_{T}^{\frac{N}{2}}}\leq C_1
(\|\sigma_0\|_{\tilde{B}_{\mu}^{\frac{N}{2},\infty}}+\|H_0\|_{\tilde{B}_{\mu}^{\frac{N}{2},\infty}}
+\|\upsilon_0\|_{B^{\frac{N}{2}-1}}),
\end{equation}
where $C_1>1$, and
\begin{eqnarray*}
 \|(\sigma,\upsilon,H)\|_{{\mathcal
{H}}_{T}^{s}}&:=&
\|(\sigma,H)(t)\|_{L_T^{\infty}(\tilde{B}_{\mu}^{s,\infty})}+
\|\upsilon(t)\|_{L_T^{\infty}(B^{s-1})}\\
&& +\mu\big(
\|(\sigma,H)(t)\|_{L_T^{1}(\tilde{B}_{\mu}^{s,1})}+\|\upsilon(t)\|_{L_T^{1}(B^{s+1})}\big).
\end{eqnarray*}
We define $d_{ij}=-\Lambda^{-1}\nabla_j \upsilon^i$,
 then $\upsilon^i=\Lambda^{-1}\nabla_j d^{ij}$. By applying $-\Lambda^{-1}\nabla_j$ to
 (\ref{bl-E3.1})$_2$, we get
\begin{equation}\label{4.1}
\partial_t d^{ij} -\mu\Delta d^{ij}+\Lambda^{-1}(\nabla_j \nabla_k
H^{ik})=\Lambda^{-1}\nabla_j (\upsilon\cdot\nabla\upsilon^i
-H^{lk}\nabla_l H^{ik}).
\end{equation}
Equation (\ref{bl-E1.4}) implies
\begin{equation}\label{4.2}
\Lambda^{-1}(\nabla_j \nabla_k H^{ik})=-\Lambda
H^{ij}-\Lambda^{-1}\nabla_k (H^{lj}\nabla_l H^{ik} -H^{lk}\nabla_l
H^{ij}).
\end{equation}
By plugging  (\ref{4.2}) into (\ref{4.1}), from (\ref{bl-E3.1}) we
 get
\begin{equation}
    \left\{
     \renewcommand{\arraystretch}{1.25}
    \begin{array}{l}
        \sigma_t +  \upsilon \cdot \nabla \sigma=0, \\
 \partial_t d^{ij} +\upsilon \cdot\nabla d^{ij}-\mu \Delta d^{ij} -\Lambda H^{ij}=G,\\
\partial_t H^{ij} +\upsilon \cdot\nabla H^{ij} +\Lambda d^{ij}=F,
    \end{array}
    \right.\label{e4.2}
    \end{equation}
    where
\begin{eqnarray*}
 \renewcommand{\arraystretch}{1.25}
F&=& \nabla_k \upsilon^i H^{kj}\\
G&=&\upsilon \cdot\nabla(-\Lambda^{-1}\nabla_j
          \upsilon^i)+\Lambda^{-1}\nabla_j [\upsilon\cdot\nabla\upsilon^i
       +(\sigma+1)\nabla_i P\\
       && \quad  -\mu \sigma\Delta \upsilon^i -H^{jk}\nabla_j H^{ik}]
+\Lambda^{-1}\nabla_k (H^{lj} \nabla_l
      H^{ik}-H^{lk}\nabla_l H^{ij}).
\end{eqnarray*}
We shall prove that there exists a constant $ M\geq C_1$ such that,
if the Besov norm of initial data $ \alpha:=
\|\sigma_0\|_{\tilde{B}_{\mu}^{s,\infty}}+ \|\upsilon_0\|_{B^{s-1}}+
\|H_0\|_{\tilde{B}_{\mu}^{s,\infty}} $ is small enough, then the
solution of (\ref{bl-E3.1}) satisfies
\begin{equation}\label{jx}
\|(\sigma,\upsilon,H)\|_{{\mathcal {H}}_T^{N/2}}\leq M\alpha
\end{equation}
 for any $T\in [0,+\infty)$.

 Assume (\ref{jx}) holds for $t\in (0,\tilde{T}]$. (\ref{h})
ensures that $\tilde{T}$ is bounded from blow by $T_1$ and there
exists a constant $T_2>0$ such that
$\|(\sigma,\upsilon,H)\|_{{\mathcal {H}}_T^{N/2}}\leq M\alpha$ for
any  $T\in [0,\tilde{T}+T_2)$. Since
$$
\partial_{i}((\sigma+1) \partial_i P) =\partial_i \big( -\upsilon\cdot\nabla \upsilon^i +\mu\sigma\Delta \upsilon^i
+ H^{jk} \partial_j H^{ik}\big),\quad \text{with}\ c_1\leq \sigma+1
\leq c_2,
$$
using Lemma \ref{12.3}, we obtain
\begin{eqnarray*}
 \renewcommand{\arraystretch}{1.25}
\|\nabla P\|_{L_T^1 (B^{N/2 -1})}&\leq& C \|\upsilon\|^2_{L_T^2
             (B^{N/2 })}+C\mu\|\sigma\|_{L_T^\infty (B^{N/2 })}
                 \|\upsilon\|_{L_T^1 (B^{N/2 +1})}\\
&&\quad+ C \|H\|^2_{L_T^2 (B^{N/2 })} + C \|\sigma\|_{L_T^\infty
                    (B^{N/2})}\|\nabla P\|_{L_T^1 (B^{N/2 -1})}\\
&\leq& C(C_1 M \alpha)^2+ CC_1 M \alpha\|\nabla P\|_{L_T^1 (B^{N/2
-1})},
\end{eqnarray*}
choose $\alpha$ so small that $CC_1 M \alpha \leq 1/2$, we have
\begin{equation}\label{xp}
\|\nabla P\|_{L_T^1 (B^{N/2 -1})}\leq C(C_1 M \alpha)^2.
\end{equation}
Now, we estimate $\|F\|_{L_T^1
(\tilde{B}_{\mu}^{\frac{N}{2},\infty})}$, since
\begin{equation}\label{f}
\|F\|_{L_T^1 (\tilde{B}_{\mu}^{\frac{N}{2},\infty})}\leq
C\|H\|_{L_T^\infty (\tilde{B}_{\mu}^{\frac{N}{2},\infty})} \|\nabla
\upsilon\|_{L_T^1 (B^{\frac{N}{2}})}\leq C(C_1 M \alpha)^2.
\end{equation}
For the term of $G$, using (\ref{xp}) we have
\begin{eqnarray*}
 \|G\|_{L_T^1(B^{\frac{N}{2}-1})} &\leq& C\|\sigma\|_{L_T^\infty
(B^{\frac{N}{2}})}\big(\|\nabla P\|_{L_T^1(B^{\frac{N}{2}-1})}\\
&& +C \|\upsilon\|^2_{L_T^2
(B^{\frac{N}{2}})}+\|\upsilon\|_{L_T^1(B^{\frac{N}{2}+1})}\big)
 + C\|H\|^2_{L_T^1(B^{\frac{N}{2}})}\\
  &\leq& C(C_1 M \alpha)^2.
\end{eqnarray*}
For the equation (\ref{bl-E3.1}), according to Proposition
\ref{p4.1}, we have
\begin{eqnarray*}
 \|(\sigma,\upsilon,H)\|_{{\mathcal {H}}_T^{N/2}}&\leq& C
 e^{C\|\upsilon\|_{L_T^1 (B^{\frac{N}{2}+1})}}\Big( \|\sigma_0\|_{\tilde{B}_{\mu}^{\frac{N}{2},\infty}}+
 \|\upsilon_0\|_{B^{\frac{N}{2}-1}}\\
&& +\|H_0\|_{\tilde{B}_{\mu}^{\frac{N}{2},\infty}}
  + \|F\|_{L_T^1 (\tilde{B}_{\mu}^{\frac{N}{2},\infty})} + \|G\|_{L_T^1
 (B^{\frac{N}{2}-1})}\Big),
\end{eqnarray*}
for $T\in [0,\tilde{T}+T_2]$. Thus, we obtain
\begin{equation}
 \|(\sigma,\upsilon,H,\nabla P)\|_{{\mathcal {H}}_T^{N/2}}\leq C
 e^{CC_1 M \alpha}(\alpha+C_1^2 M^2 \alpha^2),
\end{equation}
so choosing $M=\max (4C,C_1)$ and making the following assumptions:
$$
C_1^2 M^2 \alpha \leq 1,\quad e^{CC_1 M \alpha}\leq 2,\quad CC_1 M
\alpha <\frac{1}{2},
$$
then (\ref{jx}) holds for $T\in [0,\tilde{T}+T_2]$, hence for any
$T\in [0,+\infty)$, it is followed from a bootstrap argument. The
proof is complete.


\footnotesize
\section{References}
\begin{itemize}

\item[{[1]}]
J.Y. Chemin,   N. Lerner,  Flot de champs de vecteurs non
lipschitziens et ¨¦quations de Navier-Stokes, {\em J. Differential
Equations}, {\bf 121}, (1992) 314-328.

\item[{[2]}]
Y.M. Chen, P. Zhang,   The global existence of small solutions
 to the incompressible viscoelastic fluid system in 2 and 3 space
 dimensions,
 {\em Comm. Part. Diff. Equa }, {\bf 31}, (2006) 1793-1810.

\item[{[3]}]
R.  Danchin, Fourier Analysis Mathod for PDEs,  Lecture note.

\item[{[4]}]
R. Danchin,  Local theory in Critical spaces for Compressible viscous
and Heat-conductive gases, {\em Comm.  Part. Diff. Equa}, {\bf 26},
(2001) 1183-1233.

\item[{[5]}]
R. Danchin,  Density-dependent incompressible viscous fluids
in critical spaces, {\em Proc. Roy. Soc. Edinburgh Sect. A}, {\bf 133},
 (2003) 1311-1334.

\item[{[6]}]
R. Danchin,  Global existence in critical space for compressible
Navier-Stokes equations, {\em Invent. math}, {\bf 141}, (2000) 579-614.

\item[{[7]}]
K. Kunisch,  M.  Marduel,   Optimal control of non-isothermal
viscoelastic fluid flow, {\em J. Non-Newtonian Fluid Mechanics}, {\bf 88},
 (2000) 261-301.

\item[{[8]}]
Z. Lei,    Y. Zhou, Global existence of classical solutions for the
two-dimensional Oldroyd model via the incompressible limit, {\em SIAM J.
Math. Anal.}, {\bf 37}, (2005) 797-814.

\item[{[9]}]
Z. Lei,   C. Liu,  Y. Zhou,   Global solutions of incompressible
viscoelastic fluids,
{\em Arch. Ration. Mech. Anal.}, {\bf 188}, (2008) 317-398.

\item[{[10]}]
F. Lin,    P. Zhang,  On the initial-boundery value problem of the incompressible viscolastic fluid
 system,
{\em Comm. Pure Appl. Math}, {\bf 61}, (2008) 0539-0558.

\item[{[11]}]
 J. Qian, Well-posedness in critical spaces for incompressible viscoelastic
 fluid system, {\em Nonlinear Anal}, {\bf 72}(6),(2010) 3222-3234.
\item[{[12]}]
  J. Qian,  Z.  Zhang,  On the well-posedness in vritical spaces for
the compressible viscoelastic fluids, Inpress.

\item[{[13]}]
X. Hu, D. Wang,  Global strong solution to the density-dependent incompressible viscoelastic fluids,
Arxiv: 0905.0663v1.

\end{itemize}

Corresponding author: Huazhao Xie, \\
Department of Mathematics, Henan University of
Economics
and Law, Zhengzhou, 450002,  China\\
 E-mail:
hzh$_{-}$xie@yahoo.com.cn\\

Yunxia Fu,\\  Department of Basic Courses, PLA Commanding
Communicatioins Academy, Wuhan 430010,  China\\
E-mail: fuyunxia2005@yahoo.com.cn
\end{document}